\theoremstyle{plain}
\newtheorem{theorem}{Theorem}[section]
\newtheorem{proposition}[theorem]{Proposition}
\newtheorem{lemma}[theorem]{Lemma}
\newtheorem{corollary}[theorem]{Corollary}
\theoremstyle{definition}
\newtheorem{definition}[theorem]{Definition}
\newtheorem*{definition*}{Definition}
\newtheorem*{proposition*}{Proposition}
\newtheorem{remark}[theorem]{Remark}
\newtheorem{example}[theorem]{Example}
\numberwithin{equation}{section}
\newcommand{\A}{\mathcal{A}}
\newcommand{\ascents}{\mathrm{asc}}
\newcommand{\bh}{\mathbf{h}}
\newcommand{\be}{\mathbf{e}}
\newcommand{\bs}{\mathbf{s}}
\newcommand{\sig}{\mathbf{\mathfrak{S}}}
\newcommand{\bp}{\mathbf{p}}
\newcommand{\C}{\mathcal{C}}
\newcommand{\core}{\mathrm{core}}
\newcommand{\HH}{\mathcal{H}}
\newcommand{\height}{\mathrm{ht}}
\newcommand{\la}{\lambda}
\newcommand{\len}{\mathrm{len}}
\newcommand{\supp}{\mathrm{supp}}
\newcommand{\Park}[1]{{{\mathcal P}^{(k)}_{#1}}}
\newcommand{\QQ}{\mathbb{Q}}
\newcommand{\ZZ}{\mathbb{Z}}
\newcommand{\pchoose}[2]{\left(\substack{#1\\ #2}\right)}
\newcommand{\uincA}{u_A^{\mathrm{inc}}}
\newcommand{\udecA}{u_A^{\mathrm{dec}}}
\title{The Murnaghan--Nakayama rule for $k$-Schur functions}
\author{Jason Bandlow}
\address{Department of Mathematics, University of Pennsylvania,
David Rittenhouse Laboratory, 209 South 33rd Street,
Philadelphia, PA 19104-6395, USA}
\email{jbandlow@math.upenn.edu}
\urladdr{http://www.math.upenn.edu/~jbandlow/}
\author{Anne Schilling}
\address{Department of Mathematics, One Shields Avenue, University of California, 
Davis, CA 95616, USA}
\email{anne@math.ucdavis.edu}
\urladdr{http://www.math.ucdavis.edu/~anne}
\thanks{Partially supported by the NSF grants DMS--0652641 and DMS--0652652.}
\author{Mike Zabrocki}
\address{York University, Mathematics and Statistics,
4700 Keele Street, Toronto, Ontario M3J 1P3, Canada}
\email{zabrocki@mathstat.yorku.ca}
\urladdr{http://garsia.math.yorku.ca/~zabrocki/}
\date{April 2010}
\begin{document}

\begin{abstract}
We prove the Murnaghan--Nakayama rule for $k$-Schur functions of Lapointe and
Morse, that is, we give an explicit formula for the expansion of the product of a power sum 
symmetric function and a $k$-Schur function in terms of $k$-Schur functions. This is proved 
using the noncommutative $k$-Schur functions in terms of the nilCoxeter algebra introduced
by Lam and the affine analogue of noncommutative symmetric functions of Fomin and Greene.
\end{abstract}

\maketitle

\section{Introduction}
The Murnaghan--Nakayama rule~\cite{LR:1934, Mur:1937, Nak:1941} is a combinatorial formula for
the characters $\chi_\la(\mu)$ of the symmetric group in terms of ribbon tableaux. Under the 
Frobenius characteristic map, there exists an analogous statement on the level of symmetric functions,
which follows directly from the formula
\begin{equation} \label{origMNrule}
	p_r s_\la = \sum_\mu (-1)^{\height(\mu/\la)} s_\mu.
\end{equation}
Here $p_r$ is the $r$-th power sum symmetric function, $s_\la$ is the Schur function
labeled by partition $\la$, and the sum is over all partitions $\la \subseteq \mu$ for which 
$\mu/\la$ is a border strip of size $r$. Recall that a border strip is a connected skew shape
without any $2\times 2$ squares. The height $\height(\mu/\la)$ of a border strip
$\mu/\la$ is one less than the number of rows.

In~\cite{FG:1998}, Fomin and Greene develop the theory of Schur functions in noncommuting
variables. In particular, they derive a noncommutative version of the Murnaghan--Nakayama
rule~\cite[Theorem 1.3]{FG:1998} for the nilCoxeter algebra (or more generally the local plactic 
algebra)
\begin{equation} \label{eq:mn_s}
	\bp_r  \bs_\la = \sum_w (-1)^{\ascents(w)} w \bs_\la \; ,
\end{equation}
where $w$ is a hook word of length $r$. Here $\bp_r$ and $\bs_\la$ are the noncommutative
analogues of the power sum symmetric function and the Schur function (introduced in
Section~\ref{s:notation}).  Consider $w$ as a word in a totally ordered alphabet that 
consists of the indices of the generators of the algebra.
The word $w$ is a hook word if $w = b_l b_{l-1} \ldots b_1 a_1 a_2 \ldots a_m$ where
\begin{equation}
\label{e:hook_word}
	b_l > b_{l-1} > \cdots  > b_1 > a_1 \leq a_2 \leq \cdots \leq a_m
\end{equation}
and $\ascents(w)=m-1$ is the number of ascents in $w$.
Actually, by~\cite[Theorem 5.1]{FG:1998} it can further be assumed that the
support of $w$ is an interval.

In this paper, we derive a (noncommutative) Murnaghan--Nakayama rule for the
$k$-Schur functions of Lapointe and Morse~\cite{LM:2007}.  $k$-Schur functions
form a basis for the ring $\Lambda_{(k)} = \ZZ[h_1,\ldots,h_k]$ spanned by the
first $k$ complete homogeneous symmetric functions $h_r$, which is a subring
of the ring of symmetric functions $\Lambda$.  Lapointe and
Morse~\cite{LM:2007} gave a formula for a homogeneous symmetric function $h_r$
times a $k$-Schur function (at $t=1$) as 
\begin{equation}
\label{e:kPieri}
	h_r s_\la^{(k)} = \sum_{\mu\in \Park{}} s_{\mu}^{(k)},
\end{equation}
where the sum is over all $k$-bounded partitions $\mu\in \Park{}$ such that
$\mu/\la$ is a horizontal $r$-strip and $\mu^{(k)}/\la^{(k)}$ is a vertical $r$-strip.
Here $\la^{(k)}$ denotes the $k$-conjugate of $\la$.
Equation~\eqref{e:kPieri} is a simple analogue of the Pieri rule for usual Schur functions, 
called the $k$-Pieri rule.  This formula can in fact be taken as the definition of $k$-Schur 
functions from which many of their properties can be derived.
Conjecturally, the $k$-Pieri definition of the $k$-Schur functions is equivalent to the
original definition by Lapointe, Lascoux, and Morse~\cite{LLM:2003} in terms of atoms.

Lam~\cite{Lam:2006} defined a noncommutative version of the $k$-Schur functions
in the affine nilCoxeter algebra as the dual of the affine Stanley symmetric functions
\begin{equation} \label{e:affineStanley}
	F_w(X) = \sum_{a=(a_1,\ldots,a_t)} \langle \bh_{a_t}(u) \bh_{a_{t-1}}(u) \cdots \bh_{a_1}(u) \cdot 1,
	w \rangle \; x_1^{a_1} \cdots x_t^{a_t},
\end{equation}
where the sum is over all compositions of $\len(w)$ satisfying $a_i \in [0,k]$. Here
\begin{equation*}
	\bh_r(u) = \sum_A \udecA
\end{equation*}
are the analogues of homogeneous symmetric functions in noncommutative variables,
where the sum is over all $r$-subsets $A$ of $[0,k]$ and $\udecA$ is the product
of the generators of the affine nilCoxeter algebra in cyclically decreasing order with
indices appearing in $A$. We denote the noncommutative analogue of $\Lambda_{(k)}$
by ${\bf \Lambda}_{(k)}$ as the subalgebra of the affine nilCoxeter algebra generated by
these analogues of homogeneous symmetric functions. See Section~\ref{ss:noncommutative} for further details.

Denote by $\bs_\la^{(k)}$ the noncommutative $k$-Schur function labeled by the $k$-bounded
partition $\la$ and $\bp_r$ the noncommutative power sum symmetric function in the affine 
nilCoxeter algebra. There is a natural bijection from $k$-bounded partitions
$\la$ to  $(k+1)$-cores, denoted $\core_{k+1}(\la)$ (see
Section~\ref{ss:partitions}).  We define a vertical domino in a skew-partition
to be a pair of cells in the diagram, with one sitting directly above the
other.  For the skew of two $k$-bounded partitions $\la \subseteq \mu$ we
define the height as
\begin{equation}
\label{e:height}
	\height(\mu/\la) = \text{number of vertical dominos in $\mu/\la$} \;.
\end{equation}
For ribbons, that is skew shapes without any $2\times 2$ squares,
the definition of height can be restated as the number of occupied rows minus the number
of connected components.  Notice that this is compatible with the usual definition of
the height of a border strip.

\begin{definition} \label{def:kribbon}
  The skew of two $k$-bounded partitions, $\mu / \lambda$, is called a
  \emph{$k$-ribbon of size $r$} if $\mu$ and $\lambda$ satisfy the following
  properties: 
  \begin{enumerate}
    \item[(0)] (containment condition) $\la \subseteq \mu$ and $\la^{(k)}
      \subseteq \mu^{(k)}$; \item[(1)] (size condition) $|\mu/\la|=r$;
    \item[(2)] (ribbon condition) $\core_{k+1}(\mu)/\core_{k+1}(\la)$ is a ribbon;
    \item[(3)] (connectedness condition) $\core_{k+1}(\mu)/\core_{k+1}(\la)$
      is $k$-connected (see Definition~\ref{definition:kconnected});
    \item[(4)] (height statistics condition) $\height(\mu / \la) + \height(
      \mu^{(k)} / \lambda^{(k)}) = r-1$.  
  \end{enumerate}
\end{definition}

Our main result is the following theorem.

\begin{theorem} \label{thm:main}
For $1\le r\le k$ and $\la$ a $k$-bounded partition, we have
\begin{equation*}
  \bp_r \bs_\lambda^{(k)} = \sum_\mu (-1)^{\height(\mu/\la)} \bs_\mu^{(k)},
\end{equation*}
where the sum is over all $k$-bounded partitions $\mu$ such that $\mu /
\lambda$ is a $k$-ribbon of size $r$.
\end{theorem}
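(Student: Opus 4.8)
The plan is to transport the entire computation into the affine nilCoxeter algebra, where both $\bp_r$ and $\bs_\la^{(k)}$ lie in the commutative subalgebra ${\bf \Lambda}_{(k)}$ generated by the noncommutative homogeneous functions $\bh_r(u)$, and then to invoke the Fomin--Greene noncommutative Murnaghan--Nakayama rule~\eqref{eq:mn_s}. The first step is to check that the hypotheses of~\cite{FG:1998} are satisfied here: the generators $u_0,\dots,u_k$ obey the (affine) nilCoxeter / local-plactic relations, and by Lam's results~\cite{Lam:2006} the elements $\bh_r(u)$ pairwise commute. Granting this, \eqref{eq:mn_s} gives
\[
  \bp_r\,\bs_\la^{(k)} = \sum_w (-1)^{\ascents(w)}\, w\,\bs_\la^{(k)},
\]
where $w$ ranges over hook words~\eqref{e:hook_word} of length $r$ in the cyclic alphabet $\{0,1,\dots,k\}$; by~\cite[Theorem 5.1]{FG:1998} we may assume $\supp(w)$ is a cyclic interval, and since a hook word of length $r$ uses at most $r\le k$ distinct letters, the restriction $r\le k$ forces this interval to be a proper arc of $\ZZ/(k+1)\ZZ$, preventing a full cyclic wrap-around.

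The second step extracts the $k$-Schur expansion from this noncommutative identity. Using Lam's identification of the noncommutative $k$-Schur functions with the affine Grassmannian basis, the coefficient of $\bs_\mu^{(k)}$ in a product is recovered by letting the hook-word monomials act on the Grassmannian element $w_\la$ attached to $\core_{k+1}(\la)$; in the nilCoxeter algebra each generator $u_i$ either adds all addable cells of residue $i$ to the current core (raising the length, and hence the bounded size, by one) or annihilates it. Thus $w\,\bs_\la^{(k)}$ contributes $\bs_\mu^{(k)}$ precisely when the letters of $w$, read from right to left, successively add residue cells to $\core_{k+1}(\la)$ in a length-additive (reduced) fashion, producing $\core_{k+1}(\mu)$ with $|\mu|=|\la|+r$. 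Consequently the coefficient of $\bs_\mu^{(k)}$ equals $\sum_w (-1)^{\ascents(w)}$, summed over the hook words $w$ effecting this passage.

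It then remains to match this signed count against the $k$-ribbon combinatorics of Definition~\ref{def:kribbon}. I would show that the down-then-up residue pattern of a hook word forces $\core_{k+1}(\mu)/\core_{k+1}(\la)$ to avoid any $2\times 2$ square, giving the ribbon condition (2), while length-additivity and the proper-arc support supply the containment, size, and $k$-connectedness conditions (0), (1), (3). The decreasing part $b_l>\cdots>b_1$ records the vertical steps and the increasing part $a_1\le\cdots\le a_m$ the horizontal steps of the strip, so that $\ascents(w)=m-1$ feeds directly into the height statistic. The delicate point is that the height~\eqref{e:height} in the theorem is measured by vertical dominos in the \emph{bounded} skew $\mu/\la$, not in the core; here condition (4), $\height(\mu/\la)+\height(\mu^{(k)}/\la^{(k)})=r-1$, is the bridge, encoding that the core ribbon projects to genuine ribbon-like skews in both $\la$ and its $k$-conjugate, and it is exactly this balance that should convert the Fomin--Greene sign $(-1)^{\ascents(w)}$ into $(-1)^{\height(\mu/\la)}$.

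The main obstacle I anticipate is controlling multiplicity and cancellation: a priori several hook words $w$ may produce the same $\mu$, and many residue-addition sequences may fail to yield a $k$-bounded $\mu$ at all. The heart of the argument is therefore to prove that for each $\mu$ with $\mu/\la$ a $k$-ribbon exactly one signed hook-word term survives (or that the surviving terms collapse to the single sign $(-1)^{\height(\mu/\la)}$), whereas for every other $\mu$ the contributions cancel in pairs. I expect this to require a sign-reversing involution on hook words that fixes $\core_{k+1}(\mu)$, built from the local geometry of the core skew and its interaction with the $k$-conjugate; verifying that the fixed points of this involution are precisely the $k$-ribbons, carrying the correct sign, is the crux and the step most likely to demand careful case analysis near the cyclic boundary of the residue alphabet.
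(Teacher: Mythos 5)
Your first half follows the paper's route: the paper likewise expands $\bp_r$ as $\sum_w(-1)^{\ascents(w)}w$ over hook words of length $r$ (via $\bh_{r-i}\be_i$ and the alternating sum defining $\bs_{(r-i,1^i)}$, all taken with respect to the canonical order $I_S$ on a proper support $S\subsetneq[0,k]$ --- one cannot literally quote~\cite[Theorem~1.3]{FG:1998}, which is stated for a totally ordered alphabet, but the same argument goes through), restricts to $k$-connected words by the sign-reversing involution of~\cite[Theorem~5.1]{FG:1998}, and then lets the words act on cores via~\eqref{e:f_on_s}. Up to that point your plan is sound and essentially identical to Section~\ref{s:MN_cores}.

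The genuine gap is in your last two paragraphs, which contain the actual content of the theorem and where your anticipated mechanism is not the right one. You expect residual multiplicity and cancellation --- several hook words producing the same $\mu$, and non-ribbon $\mu$'s whose contributions must cancel via a second sign-reversing involution. The paper shows neither phenomenon occurs: once restricted to $k$-connected hook words, the correspondence $(w,\mu)\leftrightarrow\mu$ is a multiplicity-free, sign-preserving bijection. Concretely, what is needed (and what you do not supply) is: (i) a $k$-connected element of $\A_k$ admits at most \emph{one} reduced hook word (Lemma~\ref{lem:hook}\eqref{i:unique}, proved by induction using the braid relations and Edelman--Greene insertion), so no $\mu$ is hit twice; (ii) \emph{every} $k$-connected hook word $w$ with $w\cdot\la\neq 0$ yields a $\mu$ satisfying all of (0)--(4), the key step being the vertical-domino lemma (Lemma~\ref{lem:vertstrips}): a factor $(a-1)wa$ forces the added cells to stack into a vertical domino in the \emph{bounded} partition, proved by exhibiting forbidden border strips of length a multiple of $k+1$ in the core and by the hook-length-$\le k$ analysis under $\core_{k+1}^{-1}$; (iii) the cap analysis (Corollary~\ref{cor:vertstrips}, Lemma~\ref{lem:caps}) giving the exact identity $\ascents(w)=\height(\mu/\la)$, together with the conjugation trick $i\mapsto k+1-i$ yielding the height-statistics condition (4); and (iv) the converse construction (Lemma~\ref{lem:w_existence}) of $w=HV$ directly from the residues of $\core_{k+1}(\mu)/\core_{k+1}(\la)$, with a size argument forcing $\len(w)=r$. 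Without (i)--(iv) your outline asserts the conclusion rather than proving it, and the involution you propose would have nothing to act on.
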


If $k$ is sufficiently large, then 
$\mu/\la$ satisfies Definition~\ref{def:kribbon}
is equivalent to $\mu/\la$ is a connected ribbon of size
$r$ (as a skew partition). Hence for $k$ sufficiently large, 
Theorem~\ref{thm:main} implies Equation~\eqref{origMNrule}.

Let $\lambda, \nu$ be $k$-bounded partitions of the same size and $\ell$
the length of $\nu$. 
A $k$-ribbon tableau of shape $\lambda$ and type $\nu$
is a filling, $T$, of the cells of $\lambda$ with the labels $\{1,2,\ldots, \ell\}$
which satisfies the following conditions for all $1\le i\le \ell$:
\begin{enumerate}
  \item[(i)] the shape of the restriction of $T$ to the cells labeled
    $1,\ldots,i$ is a partition, and
  \item[(ii)] the skew shape $r_i$, which is the restriction of $T$ to the cells
    labeled $i$, is a $k$-ribbon of size $\nu_i$.
\end{enumerate}
We also define
\[ 
  \chi^{(k)}_{\lambda,\nu} = \sum_{T}^{} \left( \prod_{i=1}^{\ell} (-1)^{\height(r_i)} \right) \; ,
\]
where the sum is over all $k$-ribbon tableaux $T$ of shape $\lambda$ and
type $\nu$.

Iterating Theorem~\ref{thm:main} gives the following corollary. We remark that this formula
may also be considered as a definition of the $k$-Schur functions.
\begin{corollary} \label{cor:main}
  For $\nu$ a $k$-bounded partition, we have
  \[ \bp_\nu = \sum_{\lambda\in \Park{}}^{} \chi^{(k)}_{\lambda,\nu} \; \bs^{(k)}_\lambda. \]
\end{corollary}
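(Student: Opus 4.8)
The plan is to establish the corollary by iterating Theorem~\ref{thm:main}, reading off a $k$-ribbon tableau from each successive application of a noncommutative power sum. First I would record the two facts that make the iteration legitimate. One is that $\bs^{(k)}_\emptyset = 1$ is the identity of the algebra, so that $\bp_\nu = \bp_{\nu_1}\bp_{\nu_2}\cdots\bp_{\nu_\ell} = \bp_{\nu_1}\bp_{\nu_2}\cdots\bp_{\nu_\ell}\,\bs^{(k)}_\emptyset$. The other is that the noncommutative power sums $\bp_r$ commute with one another inside ${\bf \Lambda}_{(k)}$, so that the product $\bp_\nu$ is independent of the order of its factors and I am free to choose whichever order matches the tableau convention. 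I would also note that because $\nu$ is $k$-bounded, each part satisfies $1 \le \nu_i \le k$, so Theorem~\ref{thm:main} applies at every stage.

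Next I would expand the product from the inside out. Applying Theorem~\ref{thm:main} to $\bp_{\nu_1}\bs^{(k)}_\emptyset$ produces a signed sum of $\bs^{(k)}_{\lambda^{(1)}}$ over all $\lambda^{(1)}$ with $\lambda^{(1)}/\emptyset$ a $k$-ribbon of size $\nu_1$; applying $\bp_{\nu_2}$ to each resulting term adjoins a $k$-ribbon of size $\nu_2$, and so on. After all $\ell$ applications, $\bp_\nu$ is expressed as a sum over chains
\[
  \emptyset = \lambda^{(0)} \subseteq \lambda^{(1)} \subseteq \cdots \subseteq \lambda^{(\ell)} = \lambda
\]
of $k$-bounded partitions for which each $\lambda^{(i)}/\lambda^{(i-1)}$ is a $k$-ribbon of size $\nu_i$, each chain contributing the term $\left(\prod_{i=1}^\ell (-1)^{\height(\lambda^{(i)}/\lambda^{(i-1)})}\right)\bs^{(k)}_{\lambda^{(\ell)}}$. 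It then remains to identify such chains with $k$-ribbon tableaux: given a chain, label the cells of $\lambda^{(i)}/\lambda^{(i-1)}$ by $i$. Condition~(i) in the definition of a $k$-ribbon tableau (that the restriction to labels $1,\ldots,i$ be a partition) is exactly the assertion that each $\lambda^{(i)}$ is a partition, and condition~(ii) (that the cells labeled $i$ form a $k$-ribbon $r_i$ of size $\nu_i$) is exactly the step condition $\lambda^{(i)}/\lambda^{(i-1)} = r_i$. This is a bijection between the chains above and $k$-ribbon tableaux of shape $\lambda$ and type $\nu$, under which the sign $\prod_i (-1)^{\height(\lambda^{(i)}/\lambda^{(i-1)})}$ matches $\prod_i (-1)^{\height(r_i)}$. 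Summing over all chains and grouping by the terminal shape $\lambda$ then yields $\bp_\nu = \sum_{\lambda\in\Park{}} \chi^{(k)}_{\lambda,\nu}\,\bs^{(k)}_\lambda$.

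The bookkeeping of this argument is routine; the point deserving genuine care is the alignment of the factors $\bp_{\nu_1},\ldots,\bp_{\nu_\ell}$ with the increasing labels $1,\ldots,\ell$ of the tableau. Because the innermost (rightmost) factor is the one applied first and hence must correspond to label $1$, I would invoke the commutativity of ${\bf \Lambda}_{(k)}$ established in Section~\ref{ss:noncommutative} to write $\bp_\nu = \bp_{\nu_\ell}\bp_{\nu_{\ell-1}}\cdots\bp_{\nu_1}$, so that the $i$-th step of the expansion adjoins a ribbon of size $\nu_i$; this is what makes condition~(i) of the tableau definition match the nesting of the chain. I expect this ordering issue, rather than any single computation, to be the only subtle part of the proof.
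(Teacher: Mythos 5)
Your proposal is correct and is precisely the argument the paper intends: the paper's entire proof is the remark that Corollary~\ref{cor:main} follows by iterating Theorem~\ref{thm:main}, and you have simply spelled out that iteration (starting from $\bs^{(k)}_\emptyset=1$, using commutativity of the $\bp_r$ to align the factors with the tableau labels, and identifying chains of $k$-ribbons with $k$-ribbon tableaux). No discrepancy with the paper's approach.
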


All notation and definitions regarding our main Theorem~\ref{thm:main} are given in
Section~\ref{s:notation}. In Section~\ref{s:notation} we also see that there is a ring isomorphism
\[
  \iota : {\bf \Lambda}_{(k)} \to \Lambda_{(k)}
\]
sending the noncommutative symmetric functions to their symmetric function counterpart.
This leads us to the following corollary.
\begin{corollary}
\label{cor:to_sym}
  Theorem~\ref{thm:main} and Corollary~\ref{cor:main} also hold when replacing
  $\bp_r$ by the power sum symmetric function $p_r$, and $\bs_\la^{(k)}$ by the
  $k$-Schur function $s_\la^{(k)}$.
\end{corollary}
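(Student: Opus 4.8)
The plan is to deduce this corollary as a purely formal consequence of the ring isomorphism $\iota : {\bf \Lambda}_{(k)} \to \Lambda_{(k)}$ announced above, by applying $\iota$ to the noncommutative identities already established in Theorem~\ref{thm:main} and Corollary~\ref{cor:main}. The two facts I would invoke, both supplied by the construction of $\iota$ in Section~\ref{s:notation}, are that $\iota$ is a morphism of rings---hence respects products and $\ZZ$-linear combinations---and that it carries the distinguished generators to their commutative counterparts, $\iota(\bp_r) = p_r$ and $\iota(\bs_\la^{(k)}) = s_\la^{(k)}$ for every $r$ and every $k$-bounded partition $\la$.

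First I would apply $\iota$ to the identity of Theorem~\ref{thm:main}. On the left, multiplicativity gives $\iota(\bp_r \bs_\la^{(k)}) = \iota(\bp_r)\,\iota(\bs_\la^{(k)}) = p_r\, s_\la^{(k)}$; on the right, $\ZZ$-linearity pulls $\iota$ inside the finite sum and past the integer signs $(-1)^{\height(\mu/\la)}$, giving $\sum_\mu (-1)^{\height(\mu/\la)} s_\mu^{(k)}$ over the very same index set of $k$-ribbons $\mu/\la$ of size $r$. Equating the two images---legitimate because $\iota$ is in particular a bijection---yields the commutative Murnaghan--Nakayama rule $p_r s_\la^{(k)} = \sum_\mu (-1)^{\height(\mu/\la)} s_\mu^{(k)}$. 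The same argument applied to Corollary~\ref{cor:main}, using $\iota(\bp_\nu) = \prod_i \iota(\bp_{\nu_i}) = p_\nu$ together with $\ZZ$-linearity through the integer coefficients $\chi^{(k)}_{\la,\nu}$, produces $p_\nu = \sum_{\la} \chi^{(k)}_{\la,\nu}\, s^{(k)}_\la$.

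I expect no genuine obstacle here: all of the combinatorial content resides in Theorem~\ref{thm:main}, and the only point requiring care is the clean behaviour of $\iota$ on the two families of generators, $\bp_r \mapsto p_r$ and $\bs_\la^{(k)} \mapsto s_\la^{(k)}$. This is exactly what the construction of $\iota$ in Section~\ref{s:notation} is designed to establish, so once that isomorphism is in hand the corollary is immediate.
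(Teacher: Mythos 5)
Your proposal is correct and matches the paper's own (essentially implicit) argument: the paper deduces Corollary~\ref{cor:to_sym} directly from the ring isomorphism $\iota : {\bf \Lambda}_{(k)} \to \Lambda_{(k)}$ constructed in Section~\ref{s:notation}, which sends $\bp_r \mapsto p_r$ and $\bs_\la^{(k)} \mapsto s_\la^{(k)}$, exactly as you do. Spelling out the multiplicativity and $\ZZ$-linearity steps is a faithful elaboration of what the paper leaves to the reader.
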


Dual $k$-Schur functions $\sig_\la^{(k)}$ indexed by $k$-bounded partitions $\la$ 
form a basis of the quotient space $\Lambda^{(k)} = \Lambda / \langle p_r \mid  r>k \rangle 
= \Lambda / \langle m_\la \mid  \la_1 > k \rangle$
(they correspond to the affine Stanley symmetric functions indexed by Grassmannian elements).
The Hall inner product $\langle \cdot, \cdot \rangle : \Lambda \times \Lambda \to \QQ$
defined by $\langle h_\la,  m_\mu \rangle = \langle s_\la, s_\mu \rangle
= \delta_{\la,\mu}$, can be restricted to $\langle \cdot, \cdot \rangle : \Lambda_{(k)}
\times \Lambda^{(k)} \to \QQ$, so that $s_\la^{(k)}$ and $\sig_\mu^{(k)}$ form dual bases
$\langle s_\la^{(k)}, \sig_\mu^{(k)} \rangle = \delta_{\la,\mu}$.
Let $z_\la$ be the size of the centralizer of any permutation of cycle type $\la$. Then
$\langle p_\la, p_\mu \rangle = z_\la \delta_{\la,\mu}$.

\begin{corollary}
\label{cor:main1}
  For $\nu$ a $k$-bounded partition, we have
  \[ \sig^{(k)}_\nu = \sum_{\lambda \in \Park{}}^{} \frac{1}{z_\lambda} \chi^{(k)}_{\nu,\lambda} \;
  p_\lambda \; . \]
\end{corollary}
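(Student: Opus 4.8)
The plan is to pin down the coefficients in the power sum expansion of $\sig^{(k)}_\nu$ by pairing against the $k$-Schur functions and exploiting the duality $\langle s_\la^{(k)}, \sig_\mu^{(k)}\rangle = \delta_{\la,\mu}$. First I would note that, after tensoring with $\QQ$, the images of the power sums $p_\lambda$ indexed by $k$-bounded partitions $\lambda$ form a basis of the quotient $\Lambda^{(k)} = \Lambda/\langle p_r \mid r>k\rangle$. Hence, since $\sig^{(k)}_\nu$ is homogeneous, we may write $\sig^{(k)}_\nu = \sum_\lambda c_{\nu,\lambda}\, p_\lambda$ with the sum over $k$-bounded partitions $\lambda$ of size $|\nu|$ and unknown scalars $c_{\nu,\lambda}\in\QQ$. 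The goal is then reduced to showing $c_{\nu,\lambda} = \frac{1}{z_\lambda}\chi^{(k)}_{\nu,\lambda}$.

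Next I would evaluate the Hall pairing $\langle p_\mu, \sig^{(k)}_\nu\rangle$ for a $k$-bounded partition $\mu$ in two different ways. On the one hand, Corollary~\ref{cor:to_sym} applied to Corollary~\ref{cor:main} gives the symmetric-function identity $p_\mu = \sum_{\lambda\in\Park{}} \chi^{(k)}_{\lambda,\mu}\, s_\lambda^{(k)}$; pairing this against $\sig^{(k)}_\nu$ and invoking duality of the $k$-Schur and dual $k$-Schur bases collapses the sum to $\langle p_\mu, \sig^{(k)}_\nu\rangle = \chi^{(k)}_{\nu,\mu}$. On the other hand, substituting the unknown expansion $\sig^{(k)}_\nu = \sum_\lambda c_{\nu,\lambda}\,p_\lambda$ and using $\langle p_\mu, p_\lambda\rangle = z_\mu\,\delta_{\mu,\lambda}$ yields $\langle p_\mu, \sig^{(k)}_\nu\rangle = c_{\nu,\mu}\, z_\mu$. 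Equating the two expressions gives $c_{\nu,\mu} = \frac{1}{z_\mu}\chi^{(k)}_{\nu,\mu}$, which, after renaming $\mu$ to $\lambda$, is precisely the claimed formula.

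The one point requiring care—and which I expect to be the main, if minor, obstacle—is the verification that the restricted pairing $\langle\cdot,\cdot\rangle : \Lambda_{(k)}\times\Lambda^{(k)}\to\QQ$ is well-defined on the quotient and that $\langle p_\mu, p_\lambda\rangle = z_\mu\,\delta_{\mu,\lambda}$ continues to hold for the descended power sums. This requires knowing that $p_\mu$ for $k$-bounded $\mu$ actually lies in $\Lambda_{(k)}$, which holds after tensoring with $\QQ$ since $\Lambda_{(k)}\otimes\QQ = \QQ[p_1,\ldots,p_k]$ by Newton's identities, and that $p_\mu$ annihilates the ideal $\langle p_r\mid r>k\rangle$ under the inner product. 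The latter is immediate from the orthogonality of power sums in $\Lambda$: if $\lambda$ has a part exceeding $k$ then $\lambda\ne\mu$ and $\langle p_\mu, p_\lambda\rangle = 0$, so the pairing descends consistently to the quotient. Once this compatibility is recorded, the two-way computation above is purely formal and completes the proof.
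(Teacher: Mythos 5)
Your proposal is correct and follows essentially the same route as the paper: expand $\sig^{(k)}_\nu$ in power sums, pair against $p_\lambda$, and use Corollary~\ref{cor:main} together with the duality $\langle s_\mu^{(k)},\sig_\nu^{(k)}\rangle=\delta_{\mu,\nu}$ and $\langle p_\lambda,p_\mu\rangle=z_\lambda\delta_{\lambda,\mu}$ to identify the coefficient as $\chi^{(k)}_{\nu,\lambda}/z_\lambda$. The extra care you take about the pairing descending to the quotient $\Lambda^{(k)}$ and the $k$-bounded power sums spanning $\Lambda_{(k)}\otimes\QQ$ is left implicit in the paper but is consistent with its setup.
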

\begin{proof}
Denote by $b_{\nu,\lambda}^{(k)}$ the coefficient of $p_\lambda$ in $\sig_\nu^{(k)}$, that is,
$ \sig^{(k)}_\nu = \sum_\lambda b_{\nu,\lambda}^{(k)} p_\lambda$. Then, using
Corollary~\ref{cor:main} we have 
\begin{equation*}
	z_\lambda b_{\nu,\lambda}^{(k)} = \langle p_\lambda , \sig^{(k)}_\nu \rangle
	= \langle \sum_\mu \chi^{(k)}_{\mu,\lambda} s_\mu^{(k)}, \sig_\nu^{(k)} \rangle 
	= \chi_{\nu,\lambda}^{(k)}. \qedhere
\end{equation*}
\end{proof}

Since the product of two $k$-bounded power symmetric functions is again
a $k$-bounded power symmetric function, the expansion of the dual
$k$-Schur functions in terms of $p_\la$ of Corollary~\ref{cor:main1} is
better suited for multiplication than the expansion in terms of monomial
symmetric functions. The product of two $k$-bounded monomial symmetric
functions is a sum of monomial symmetric functions which are not
necessarily $k$-bounded.

The classical Murnaghan-Nakayama rule (corresponding to sufficiently large $k$) 
has implications for representation theory.  The well-known Frobenius map sends a 
representation $V$ of the symmetric group $S_n$ to the symmetric function
\[ \sum_{\mu}^{} \frac{\chi_V(\mu)}{z_\mu} p_\mu\; , \]
where $\chi_V(\mu)$ is the character of $V$ evaluated on the conjugacy class
of type $\mu$.  This map sends the irreducible representation $V_\lambda$ to
the Schur function $s_\lambda$.  Therefore, whenever a Schur-positive
symmetric function is expanded in terms of the power-sum basis, the
coefficients can be interpreted as the character of some corresponding
representation.  However, this does not apply to Corollary~\ref{cor:main1},
since the functions $\sig^{(k)}_\lambda$ are not Schur positive.  The
possibility of a different form of a $k$-Murnaghan-Nakayama rule that would
have representation theoretical implications is discussed in
Section~\ref{s:outlook}.

The paper is organized as follows. In Section~\ref{s:notation} we introduce all notation and 
definitions. In particular, we define the various noncommutative symmetric functions.
In Section~\ref{s:MN_cores} we prove Theorem~\ref{thm:main_cores}, which is the analogue
of Theorem~\ref{thm:main} formulated in terms of the nilCoxeter algebra. 
In Section~\ref{s:equivalence} it is shown that Theorems~\ref{thm:main} and~\ref{thm:main_cores}
are equivalent. We conclude in Section~\ref{s:outlook} with some related open questions.
In Appendices~\ref{appendix:chi} and~\ref{appendix:chi-tilde} we list some tables for
$\chi_{\la,\mu}^{(k)}$ and its dual version $\tilde{\chi}_{\la,\mu}^{(k)}$.

\section*{Acknowledgments}
We thank Drexel University and the American Institute for Mathematics (AIM) for hosting two
focused research workshops in March and May 2009, where this work began.
Special thanks to Sara Billey, Luc Lapointe, Jennifer Morse, Thomas Lam, Mark Shimozono, and 
Nicolas Thi\'ery for many stimulating discussions. This work benefited greatly from
calculations run in the open source computer algebra system {\tt sage}~\cite{sage}
and {\tt sage-combinat}~\cite{sage-combinat}.

\section{Notation}
\label{s:notation}
In this section we give all necessary definitions.

\subsection{Partitions and cores}
\label{ss:partitions}
A sequence $\la=(\la_1,\la_2,\ldots,\la_\ell)$ is a partition if $\la_1\ge \la_2\ge \cdots \ge \la_\ell>0$.
We say that $\ell$ is the length of $\la$ and $|\la| = \la_1 + \cdots + \la_\ell$ is its size. 
A partition $\la$ is $k$-bounded if $\la_1\le k$. We denote by $\Park{}$ the set of all $k$-bounded
partitions.

One may represent a partition $\la$ by its partition diagram, which contains $\la_i$ boxes in
row $i$. The conjugate $\la^t$ corresponds to the diagram with rows and columns interchanged.
We use French convention and label rows in decreasing order from bottom to top. For example
\[
  {\tiny \tableau[sbY]{|,,}} \qquad \text{and} \qquad {\tiny \tableau[sbY]{||,}}
\]
correspond to the partition $(3,1)$ and its conjugate $(2,1,1)$, respectively.

For two partitions $\la$ and $\mu$ whose diagrams are contained, that is $\la\subseteq \mu$,
we denote by $\mu/\la$ the skew partition consisting of the boxes in $\mu$ not contained
in $\la$. A ribbon is a skew shape which does not contain any $2\times 2$ squares.
An $r$-border strip is a connected ribbon with $r$ boxes.

A partition $\la$ is an $r$-core if no $r$-border strip can be removed from $\la$ 
such that the result is again a partition. For example
\begin{equation}
\label{e:core_example}
  {\tiny \tableau[sbY]{||,,|,,,,,|}}
\end{equation}
is a $4$-core. We denote the set of all $r$-cores by $\C_r$. 

For a cell $c=(i,j)\in \la$ in row $i$ and column $j$ we define its hook length to be the
number of cells in row $i$ of $\la$ to the right of $c$ plus the number of cells in column
$j$ of $\la$ weakly above $c$ (including $c$). An alternative definition of
an $r$-core is a partition without any cells of hook length equal to a
multiple of $r$~\cite[Ch. 1, Ex. 8]{Mac:1995}. The content of cell $c=(i,j)$
is given by $j-i \pmod{r}$.

There exists a bijection~\cite{LM:2005}
\begin{equation}
	\core_{k+1} : \Park{} \to \C_{k+1}
\end{equation}
from $k$-bounded partitions to $(k+1)$-cores defined as follows. Let $\la\in \Park{}$ considered 
as a set of cells.
Starting from the smallest row, check whether there are any cells of hook length greater than $k$.
If so, slide the row and all those in the rows 
below to the right by the minimal amount so that none of
cells in that row have a hook length greater than $k$. Then continue the procedure
with the rows below.  The positions of the cells define a skew partition
and the outer partition is a $(k+1)$-core.

The inverse map $\core_{k+1}^{-1} : \C_{k+1} \to \Park{}$ is slightly easier to
compute.  The partition $\core_{k+1}^{-1}(\kappa)$ is of the same length as the
$(k+1)$-core $\kappa$ and the $i^{th}$ entry of the partition  is the
number of cells in the the $i^{th}$ row of $\kappa$ which have a hook
smaller or equal to $k$.

Let $\la \in \Park{}$. Then the $k$-conjugate $\la^{(k)}$ of $\la$ is defined as
$\core_{k+1}^{-1}(\core_{k+1}(\la)^t )$.

\begin{example}
\label{ex:core}
For $k=3$, take $\la=(3,2,1,1)\in \Park{}$ so that
\[
  \core_4 : \quad {\tiny \tableau[sbY]{||,|,,|}}  \quad \mapsto \quad
  {\tiny \tableau[sbY]{*|*|,*,*|,,,*,*,*|}}
\]
which is the $4$-core in~\eqref{e:core_example} (where we have drawn the original
boxes of $\la$ in bold). To obtain the $k$-conjugate $\la^{(3)}$ of 
$\la$ we calculate
\[
  \core_4^{-1} : \quad {\tiny \tableau[sbY]{*|*|*|,*|,*|,,*,*|}} \qquad \mapsto \qquad 
  {\tiny \tableau[sbY]{|||||,|}} \; .
\]
\end{example}

\subsection{Affine nilCoxeter algebra}
\label{ss:affine_nilcoxeter}
The affine nilCoxeter algebra $\A_k$ is the algebra over $\ZZ$ generated by
$u_0,u_1,\ldots,u_k$ satisfying
\begin{equation}
\label{e:braid}
\begin{aligned}
	&u_i^2 = 0 && \text{for $i\in [0,k]$,}\\
	&u_i u_{i+1} u_i = u_{i+1} u_i u_{i+1} && \text{for $i\in [0,k]$,}\\
	&u_i u_j = u_j u_i && \text{for $i,j\in [0,k]$ such that $|i-j|\ge 2$,}
\end{aligned}
\end{equation}
where all indices are taken modulo $k+1$. We view the indices $i\in[0,k]$ as living on
a circle, with node $i$ being adjacent to nodes $i-1$ and $i+1$ (modulo $k+1$).
As with Coxeter groups, we have a notion of reduced words of elements $u\in \A_k$
as the shortest expressions in the generators. If $u = u_{i_1} \cdots u_{i_m}$ is a reduced
expression, we call $\{i_1,\ldots,i_m\}$ the support of $u$ denoted $\supp(u)$ (which is independent 
of the reduced word and only depends on $u$ itself). Also, $i_1\ldots i_m$ is the corresponding
reduced word and $\len(u)=m$ is the length of $u$.

A word $w$ in the letters $[0,k]$ is cyclically decreasing (resp. increasing) if the length of $w$ is 
at most $k$, every letter appears at most once, and if $i,i-1\in w$ then $i$ occurs before 
(resp. after) $i-1$. Note that since $u_i$ and $u_j$ commute if $i$ is not adjacent to $j$, all
cyclically decreasing (resp. increasing) words $w$ with the same support give rise to the same
affine nilCoxeter group element $\prod_{i\in w} u_i$.
For a proper subset $A\subsetneq [0,k]$ we define $\udecA \in \A_k$
(resp. $\uincA\in\A_k$) to be the element corresponding to cyclically decreasing (resp. increasing) words with support $A$.  
\begin{example}
Take $k=6$ and $A=\{0,2,3,4,6\}$. Then $\udecA=(u_0 u_6) (u_4 u_3 u_2) = 
(u_4 u_3 u_2) (u_0 u_6)$ and $\uincA = (u_6 u_0)(u_2 u_3 u_4) 
= (u_2 u_3 u_4)(u_6 u_0)$.
\end{example}

If $u\in \A_k$ is supported on a proper subset
$S$ of $[0,k]$, then we specify a canonical interval $I_S$ which contains the subset
$S$. Identify the smallest element $a$ (from the numbers
$0$ through $k$ with the integer order) which does not appear in
$S$. Then the canonical cyclic interval which we choose orders the elements 
$$a+1 < a+2 < \cdots < k < 0 < 1 < \cdots < a-1,$$
(where we identify $k$ and $-1$ when necessary).

\begin{definition}
\label{definition:kconnected}
An element $u\in \A_k$ (resp. word $w$) is $k$-connected if its support $S$ is an interval in
$I_S$.
\end{definition}

\begin{example}
For $k=6$, the word $w=0605$ is $k$-connected, whereas $w=06052$ is not.
\end{example}

Suppose $u\in\A_k$ has support $S\subsetneq [0,k]$. We say that $u$ corresponds to a hook
word if it has a reduced word $w$ of the form of Equation~\eqref{e:hook_word} with respect to the
canonical order $I_S$. In this case we denote by $\ascents(u)$ or $\ascents(w)$ the number of ascents 
$\ascents_{I_S}(w)$ in the canonical order. 

\begin{example}
Take $u=u_3 u_2 u_6 u_0 u_4\in \A_6$. In this case $S=\{0,2,3,4,6\}$ and 
$I_S$ is given by $2<3<4<5<6<0$. The word $w=(3)(2460)$ is a hook word with
respect to $I_S$ and $\ascents(u)=3$.
\end{example}

The generators $u_i$ in the nilCoxeter algebra $\A_k$ act on a $(k+1)$-core 
$\nu \in \C_{k+1}$ by
\begin{equation}
\label{e:nilcoxeter_action}
  u_i \cdot \nu = \begin{cases} \text{$\nu$ with all corner cells of content $i$ added if they exist,}\\
  \text{0 otherwise.} \end{cases}
\end{equation}
This action is extended to the rest of the algebra $\A_k$ and can be shown to be consistent
with the relations of the generators.
Under the bijection $\core_{k+1}^{-1}$ to $k$-bounded partitions only the topmost box added to diagram
survives. The action of $u_i$ on a $k$-bounded partition $\la$ under $\core_{k+1}$ is
denoted $u_i \cdot \la$.

\begin{example}
Taking $\nu=\core_4(\la)$ from Example~\ref{ex:core} we obtain
\[
  u_2 \cdot \nu = 
  {\tiny \tableau[sbY]{||,,,*|,,,,,,*|}}
  \qquad \text{and} \qquad
    \core_4^{-1}(u_2 \cdot \nu) = 
  {\tiny \tableau[sbY]{||,,*|,,|}}
\]
where the boxes added by $u_2$ of content 2 are indicated in bold.
\end{example}

\subsection{Noncommutative symmetric functions}
\label{ss:noncommutative}
We now give the definition of the noncommutative symmetric functions
$\be_r$, $\bh_r$, $\bs_{(r-i,1^i)}$, $\bp_r$, and $\bs_\la^{(k)}$ in terms of the affine
nilCoxeter algebra.

Following Lam~\cite{Lam:2006}, for $r=1,\ldots,k$, we define the noncommutative homogeneous 
symmetric functions 
\begin{equation*}
\bh_r = \sum_{A \in \pchoose{[0,k]}{r}} \udecA \; ,
\end{equation*}
where $\udecA$ is a cyclically decreasing element with support $A$ 
as defined in Section~\ref{ss:affine_nilcoxeter}.
We take as a defining relation for the elements $\be_r$ the equation 
$\sum_{i=0}^r (-1)^i \be_{r-i} \bh_i = 0$. It can be shown~\cite[Proposition 16]{Lam:2006} that then 
\begin{equation*}
\be_r = \sum_{A \in \pchoose{[0,k]}{r}} \uincA \; ,
\end{equation*}
where $\uincA$ is a cyclically increasing element with support $A$.
More generally, the hook Schur functions for $r\leq k$ are given by
\begin{equation*}
	\bs_{(r-i,1^i)} = \bh_{r-i} \be_i - \bh_{r-i+1} \be_{i-1} + \cdots + (-1)^i \bh_r
\end{equation*}
and we will demonstrate in Corollary \ref{cor:hookschur} (below)
that these elements may also be expressed as a sum over certain words.

The noncommutative power sum symmetric functions for $1\leq r \leq k$
are defined through the analogue
of a classical identity with ribbon Schur functions
\begin{equation*}
 	\bp_r = \sum_{i=0}^{r-1} (-1)^i \bs_{(r-i,1^i)}.
\end{equation*}

Lam~\cite[Proposition 8]{Lam:2006}  proved that, even though the variables $u_i$ do not commute, the
elements $\bh_r$ for $1\le r\le k$ commute and consequently, so do the other elements $\be_r$,
$\bp_r$, $\bs_{(r-i,1^i)}$ we have defined in terms of the $\bh_r$. We define
${\bf \Lambda}_{(k)} = \ZZ[\bh_1,\ldots,\bh_k]$ to be the noncommutative analogue of 
$\Lambda_{(k)}=\ZZ[h_1,\ldots,h_k]$.

We define the noncommutative $k$-Schur functions $\bs_\la^{(k)}$ by the noncommutative
analogue of the $k$-Pieri rule~\eqref{e:kPieri}.
Let us denote by $\HH_r^{(k)}$ the set of all pairs $(\mu,\la)$ of $k$-bounded partitions
$\mu,\la$ such that $\mu/\la$ is a horizontal $r$-strip and $\mu^{(k)}/\la^{(k)}$ is a vertical
$r$-strip (which describes the summation in the $k$-Pieri rule). Then for a $k$-bounded
partition $\la$ we require that
\begin{equation}
\label{e:kPieri_noncom}
\bh_r \bs_\la^{(k)} = \sum_{\mu: (\mu,\la) \in \HH_r^{(k)}} \bs_{\mu}^{(k)}.
\end{equation}
This definition can be used to expand the $\bh_\mu$ elements in
terms of the elements $\bs_{\la}^{(k)}$. The transition matrix is described by the
number of $k$-tableaux of given shape and weight (see~\cite{LM:2005}). 
Since this matrix is unitriangular, this system of relations can be inverted over the integers 
and hence $\{\bs_\la^{(k)} \mid \la \in \Park{} \}$ forms a basis of ${\bf \Lambda}_{(k)}$.

As shown in~\cite{LM:2005, LLMS:2006}, for $1 \leq r \leq k$, we have if $(\mu,\la) \in \HH_r^{(k)}$, then
there is a cyclically decreasing element $u \in \A_k$ of length $r$ such that $\mu = u\cdot \la$.  Moreover,
if $u\in \A_k$ is cyclically decreasing and $\mu = u \cdot \la \neq 0$, then $(\mu,\la) \in \HH_r^{(k)}$.
\begin{example}
Take $\la=(3,3,1,1) \in \mathcal{P}^{(3)}$ and $u=u_0u_3$. Then
\[
  \core_4(\la) \; = \; 
  {\tiny \tableau[sbY]{||,,,|,,,,,,|}}
  \quad \text{and} \quad
  u\cdot \core_4(\la) =
  {\tiny \tableau[sbY]{*||,*,*|,,,,*,*|,,,,,,,*,*|}}
\]
so that $((3,3,2,1,1),(3,3,1,1)) \in \HH_2^{(3)}$.
\end{example}

Hence, we may rewrite~\eqref{e:kPieri_noncom} as
\[
  \bh_r \bs_\la^{(k)} = \sum_{\mu: (\mu,\la) \in \HH_r^{(k)}} \bs_{\mu}^{(k)}
                                  =  \sum_{A \in \pchoose{[0,k]}{r} } \bs^{(k)}_{\udecA \cdot \la} \; ,
\]
where  we assume $\bs^{(k)}_{\udecA \cdot \la} = 0$ if $\udecA \cdot \la =0$.
The elements $\bh_r = \sum_{A \in \pchoose{[0,k]}{r} } \udecA$ generate 
${\bf \Lambda}_{(k)}$, and therefore more generally for any
element ${\bf f} = \sum_u c_u u \in {\bf \Lambda}_{(k)}$ with $u\in \A_k$ and $c_u\in \ZZ$
\begin{equation}
\label{e:f_on_s}
  {\bf f}~~\bs_\la^{(k)} = \sum_u c_u \bs_{u \cdot \la}^{(k)}~.
\end{equation}

Since all of the noncommutative symmetric functions in this section commute and satisfy
the same defining relations as their commutative counterparts, there is a ring isomorphism
\[
  \iota : {\bf \Lambda}_{(k)} \to \Lambda_{(k)}
\]
sending $\bh_r\mapsto h_r$, $\be_r \mapsto e_r$, $\bp_r \mapsto p_r$, $\bs_\la^{(k)} \mapsto
s_\la^{(k)}$.

\section{Main result: Murnaghan--Nakayama rule in terms of words}
\label{s:MN_cores}

We now restate Theorem~\ref{thm:main} in terms of the action of words. This result
is proved in the remainder of this section.

\begin{theorem}  \label{thm:main_cores}
For $1 \leq r \leq k$ and $\la$ a $k$-bounded partition, we have
\begin{equation} \label{eq:main_cores}
  \bp_r \bs_\lambda^{(k)} = \sum_{(w,\mu)} (-1)^{\ascents(w)} \bs_\mu^{(k)},
\end{equation}
where the sum is over all pairs $(w,\mu)$ of reduced words $w$ in the affine nilCoxeter algebra
$\A_k$ and $k$-bounded partitions $\mu$ satisfying
\begin{enumerate}
  \renewcommand{\labelenumi}{(\arabic{enumi}$'$)}
  \item (size condition) $\len(w) = r$;
  \item (ribbon condition) $w$ is a hook word;
  \item (connectedness condition) $w$ is $k$-connected;
  \item (weak order condition) $\mu= w\cdot\la$.
\end{enumerate}
\end{theorem}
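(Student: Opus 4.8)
The plan is to prove Theorem~\ref{thm:main_cores} by reducing the statement to the noncommutative Murnaghan--Nakayama rule of Fomin and Greene, Equation~\eqref{eq:mn_s}, and then transporting that identity through the action~\eqref{e:f_on_s} of ${\bf \Lambda}_{(k)}$ on the noncommutative $k$-Schur functions. The starting observation is that $\bp_r$ lives in ${\bf \Lambda}_{(k)}$, so by~\eqref{e:f_on_s} we have $\bp_r \bs_\la^{(k)} = \sum_u c_u \bs_{u\cdot\la}^{(k)}$, where $\bp_r = \sum_u c_u u$ is the expansion of the noncommutative power sum in the nilCoxeter generators. Thus everything hinges on computing the coefficients $c_u$, \emph{i.e.}, on expanding $\bp_r$ itself as a signed sum over reduced words $u\in\A_k$. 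The first step is therefore to establish Corollary~\ref{cor:hookschur} (promised in the excerpt), namely that each hook Schur function $\bs_{(r-i,1^i)}$ expands as a sum of the group elements $u\in\A_k$ whose reduced words are hook words of the appropriate shape; combined with the definition $\bp_r = \sum_{i=0}^{r-1}(-1)^i\,\bs_{(r-i,1^i)}$, this should yield $\bp_r = \sum_w (-1)^{\ascents(w)} [w]$, the sum running over hook words $w$ of length $r$.

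The second step is to pin down the support constraint. A priori the expansion of $\bp_r$ ranges over \emph{all} hook words of length $r$ in the letters $[0,k]$, but the theorem restricts the sum to $k$-connected $w$ (condition $(3')$). Here I would invoke the affine analogue of~\cite[Theorem 5.1]{FG:1998} alluded to in the introduction: since the $\bh_r$, and hence $\bp_r$, satisfy exactly the Fomin--Greene relations (commutativity was established by Lam~\cite[Proposition 8]{Lam:2006}), the same cancellation argument that lets one assume the support is an interval applies, so that only $k$-connected words survive with nonzero coefficient, each contributing sign $(-1)^{\ascents(w)}$. The remaining steps are essentially bookkeeping: apply $\bp_r = \sum_w (-1)^{\ascents(w)}[w]$ to $\bs_\la^{(k)}$ via~\eqref{e:f_on_s}, rename $\mu = w\cdot\la$ to obtain condition $(4')$, discard the terms with $w\cdot\la=0$ (for which $\bs_{w\cdot\la}^{(k)}=0$), and collect the sign. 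This reproduces~\eqref{eq:main_cores} with all four conditions $(1')$--$(4')$.

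The main obstacle I anticipate is the second step: establishing that the support-interval reduction of Fomin--Greene goes through in the \emph{affine} nilCoxeter algebra $\A_k$, where the alphabet is a cycle rather than a line and the notions of ascent and hook word are taken with respect to the \emph{canonical} cyclic interval $I_S$ rather than a fixed global order. The delicacy is that the element $u\in\A_k$ determines its support $S$ but the ordering used to define $\ascents(u)$ depends on $S$ through $I_S$; one must check that the Fomin--Greene sign-reversing involution, which cancels hook words whose support is not an interval, respects this $S$-dependent choice of order and does not create ambiguity when two hook words with different supports yield the same group element. I expect this to require care in verifying that the cyclically decreasing/increasing descriptions of $\bh_r$ and $\be_r$ feed correctly into the hook-word expansion, and that no spurious cancellation or doubling occurs among group elements supported on the circle; once that compatibility is confirmed, the rest of the argument is routine.
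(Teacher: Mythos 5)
Your proposal follows essentially the same route as the paper: expand $\bp_r$ via the hook Schur functions into a signed sum over hook words (the paper does this through Lemma~\ref{lem:hrmiei} on the products $\bh_{r-i}\be_i$, which is exactly the ``feeding in'' step you flag), restrict to $k$-connected words by the Fomin--Greene sign-reversing involution applied within each canonical interval, and then act on $\bs_\la^{(k)}$ via Equation~\eqref{e:f_on_s}. The obstacle you identify --- that the support-interval cancellation must respect the $S$-dependent canonical order --- is precisely the point the paper addresses in its Lemma on condition $(3')$, so the plan is sound and matches the published argument.
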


In Section~\ref{s:equivalence} we will show the equivalence of Theorem~\ref{thm:main}
and Theorem~\ref{thm:main_cores}. 

The proof of Theorem~\ref{thm:main_cores} essentially amounts to computing an
expression for $\bp_r$ in terms of words.  Since all words involved will be of
length $\le k$, there will be a canonical order on the support as introduced in
Section~\ref{ss:affine_nilcoxeter}. The statistic $\ascents(w)$, and the property of being a hook word, 
will always be in terms of this canonical ordering.

\begin{lemma} \label{lem:hrmiei}For $0\le i\le r\leq k$,
  \begin{equation} \label{eq:he}
	\bh_{r-i} \be_i = \sum_w w \; ,
  \end{equation}
  where the sum is over all words $w$ satisfying $(1')$, $(2')$ with respect to the canonical
  order, and $\ascents(w)
  \in \{i-1, i\}$.
\end{lemma}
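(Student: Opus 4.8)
The plan is to compute the product $\bh_{r-i}\be_i$ directly from the definitions
\[
  \bh_{r-i} = \sum_{B \in \pchoose{[0,k]}{r-i}} \udecA[B], \qquad
  \be_i = \sum_{C \in \pchoose{[0,k]}{i}} \uincA[C],
\]
where I have abused notation, writing $\udecA[B]$ for the cyclically decreasing element with support $B$ and $\uincA[C]$ for the cyclically increasing element with support $C$. Expanding the product gives a double sum over pairs $(B,C)$ of the product $\udecA[B]\,\uincA[C]$. Each such product is a single word of length $r-i+i = r$ (before any cancellation or collapse), obtained by concatenating a decreasing run on $B$ with an increasing run on $C$. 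The heart of the argument is to understand, for each fixed support $S$ with $|S|=r$, exactly which hook words $w$ on $S$ arise, and with what multiplicity, as $(B,C)$ ranges over all ways to write $S = B \sqcup C$ (or more generally $S = B \cup C$).

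First I would fix the target word: a hook word $w$ on support $S$ has the shape of Equation~\eqref{e:hook_word}, a strictly decreasing ``arm'' $b_l > \cdots > b_1$ followed by a weakly increasing ``leg'' $a_1 \le \cdots \le a_m$. Since every generator appears at most once in the $u$-products coming from $\bh$ and $\be$, and since $u_i^2=0$, only words in which each letter of $S$ appears exactly once survive; so in fact $w$ is a permutation of $S$ and the decomposition is a genuine disjoint union $S = B \sqcup C$. The key combinatorial step is to show that a hook word $w$ with $\ascents(w) = j$ (so the leg has $j+1$ letters and the arm has $r - j - 1$ letters, giving $l = r-j-1$ descents counting the arm-to-leg junction appropriately) can be realized as $\udecA[B]\uincA[C]$ precisely when $B$ is the support of a decreasing prefix and $C$ is the support of an increasing suffix compatible with $w$. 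Because cyclically decreasing and cyclically increasing elements depend only on their support (as noted in the excerpt after Equation~\eqref{e:braid}), the count reduces to choosing where to ``split'' the word $w$ into a decreasing piece and an increasing piece.

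The main obstacle — and the crux of why the answer is $\ascents(w)\in\{i-1,i\}$ rather than a single value — lies in this splitting count. Given a hook word $w = b_l \cdots b_1 a_1 \cdots a_m$, I expect the valid splittings $S = B \sqcup C$ with $|C| = i$ to be essentially forced except at the junction between the arm and the leg: the decreasing part must capture the strict-descent run and the increasing part the weak-ascent run, but the single letter $a_1$ (the bottom of the hook) can be assigned to either side, since $b_1 > a_1$ makes $b_1 a_1$ decreasing while $a_1 \le a_2$ makes $a_1 a_2$ increasing. This boundary ambiguity is what produces contributions to $\be_i$ from words with $\ascents = i-1$ (when $a_1$ is counted in the decreasing block, so the increasing block has $i$ letters) and from words with $\ascents = i$ (when $a_1$ starts the increasing block). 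I would make this precise by checking that for a hook word $w$ of length $r$, the number of factorizations contributing to $\bh_{r-i}\be_i$ is exactly $1$ when $\ascents(w) \in \{i-1,i\}$ and $0$ otherwise, handling separately the degenerate cases $i=0$ (pure decreasing, $\ascents = -1$ vacuously, or by convention $m=0$) and $i=r$ (pure increasing). The cyclic nature of the alphabet and the need to verify that the concatenation $\udecA[B]\uincA[C]$ is already reduced (no braid collapse shortens it) should be dispatched using the canonical interval order $I_S$ from Section~\ref{ss:affine_nilcoxeter}, which linearizes the support and reduces every claim about cyclic decreasing/increasing words to an ordinary statement about decreasing/increasing words in a linear order.
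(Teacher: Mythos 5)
Your overall strategy coincides with the paper's: expand $\bh_{r-i}\be_i$ as a double sum over pairs (cyclically decreasing, cyclically increasing), linearize everything via the canonical order on the union of the supports, and observe that the only freedom in splitting a length-$r$ hook word into a strictly decreasing prefix of length $r-i$ and an increasing suffix of length $i$ is whether the junction letter $a_1$ is absorbed into the decreasing block or starts the increasing block; this is exactly what produces the two ascent values $i-1$ and $i$, with the corner cases $i=0$ and $i=r$ handled separately. That part of your plan is correct and is essentially the paper's proof.

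The genuine gap is your claim that ``only words in which each letter of $S$ appears exactly once survive, so the decomposition is a genuine disjoint union $S=B\sqcup C$.'' This is false: the relation $u_j^2=0$ only kills a repeated letter when the two occurrences can be brought adjacent, and a letter lying in both $B$ and $C$ need not be cancellable. For instance, with $k=3$, $r=4$, $i=2$, take $B=\{1,2\}$ and $C=\{0,1\}$: the term $u_B^{\mathrm{dec}}\,u_C^{\mathrm{inc}}=u_2u_1u_0u_1$ is nonzero and reduced of length $4$ (since $u_1u_0u_1=u_0u_1u_0$, it equals $u_0u_2u_1u_0$), and the corresponding word $2101$ is a hook word with exactly one ascent in the canonical order $0<1<2$ on its support $\{0,1,2\}$ --- precisely a term the lemma requires on the right-hand side. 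Discarding all pairs with $B\cap C\neq\emptyset$ therefore drops nonzero contributions from the left-hand side, and your bijection never produces the hook words with repeated letters that genuinely appear on the right-hand side, so the two sides are only matched on proper subsets of their terms. The repair is simply to drop the disjointness assumption: the correspondence between pairs $(u,v)$ and hook words with $\ascents(w)\in\{i-1,i\}$ is a bijection at the level of words given by concatenation and the (unique) split position, and overlapping supports cause no difficulty. Relatedly, you need not verify that $u_B^{\mathrm{dec}}u_C^{\mathrm{inc}}$ is reduced: when it is not, the corresponding hook word represents $0$ in $\A_k$, so such terms contribute nothing to either side.
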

\begin{proof}
  $\bh_{r-i}$ is the sum over all cyclically decreasing nilCoxeter group elements of
  length $r-i$ and $\be_{i}$ is the sum over all cyclically increasing nilCoxeter
  group elements of length $i$.  Hence 
  $$\bh_{r-i} \be_i = \sum_{\substack{(u,v)\\\text{$u$ cycl. dec., $|u|=r-i$}\\
    \text{$v$ cycl. inc., $|v|=i$}}} uv.$$
  Rearrange each $u$ and $v$ so that they together form a hook with respect to
  the canonical order associated to the set $\supp(u) \cup \supp(v)$.  Either
  the last letter of $u$ is smaller than the first letter of $v$, in which
  case the total ascent is $i$, or the last letter of $u$ is bigger than the
  first letter in $v$, in which case the total number of ascents is $i-1$.
  This yields a bijection between hook words in the canonical order and pairs appearing 
  in this sum with the number of ascents in $\{i,i-1\}$. In the corner case $i=0$ (resp. $i=r$) the number
  of ascents
  can only be 0 (resp. $r-1$ due to the fact that the words are of length $r$).
\end{proof}

\begin{example}
  Take $k=8$, $u=(u_1u_0u_8)(u_5u_4)$ and $v=(u_2u_3)(u_0)$, so that $i=3$ and $r=8$. In this
  case the canonical order is $7<8<0<1<2<3<4<5$ and we would write $uv$ as
  $uv=[(u_5u_4)(u_1u_0u_8)][(u_0)(u_2u_3)]$, giving rise to the word 
  $w=(5410)(8023)$ with $i=3$ ascents. If on the other
  hand $u=(u_1u_0)(u_5u_4)$ and $v=(u_2u_3)(u_8u_0)$, so that $i=4$ and $r=8$, then we would
  write $uv=[(u_5u_4)(u_1u_0)][(u_8u_0)(u_2u_3)] $, giving rise to the word
  $w=(5410)(8023)$ with $i-1=3$ ascents.
\end{example}

\begin{remark}
  Note that there may be multiplicities in~\eqref{eq:he} with respect to affine
  nilCoxeter group elements because there may be several hook words with the
  same number of ascents that are equivalent to the same affine nilCoxeter element.
  For example, $(4)(20)$ and $(0)(24)$ are two different hook words with exactly one
  ascent with respect to the interval $I_{\{0,2,4\}} = \{ 2 < 4 < 0 \}$. Of
  course, they both correspond to the same affine nilCoxeter element since all
  letters in the word commute. The element with $u=u_2$ and $v=u_4u_0$
  would give rise to the hook word $w=(240)$ with $2$ ascents.
\end{remark}

We can use this lemma to get an expression for hook Schur functions.
\begin{corollary} \label{cor:hookschur}
  For $0\le i \le r \le k$, the hook Schur function is
  $$\bs_{(r-i,1^i)} = \sum_w w \; ,$$
  where the sum is over all words $w$ satisfying $(1'), (2')$ with respect to the canoncial
  order, and $\ascents(w) = i$.
\end{corollary}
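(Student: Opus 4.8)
The plan is to prove the identity
\[
  \bs_{(r-i,1^i)} = \sum_w w,
\]
where the sum ranges over all hook words $w$ (with respect to the canonical order on their support) of length $r$ with exactly $i$ ascents, by taking the alternating sum of Lemma~\ref{lem:hrmiei} and watching the telescoping. Recall that the hook Schur function is defined by
\[
  \bs_{(r-i,1^i)} = \sum_{j=0}^i (-1)^{j} \bh_{r-i+j} \be_{i-j}.
\]
The key observation I would exploit is that Lemma~\ref{lem:hrmiei} expresses each product $\bh_{r-i+j}\be_{i-j}$ as a sum over hook words of length $r$ whose ascent number lies in $\{(i-j)-1, i-j\}$. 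Thus the generating function for $\bh_{r-i+j}\be_{i-j}$ collects exactly the hook words with $i-j-1$ or $i-j$ ascents, and as $j$ runs over $0,\ldots,i$ the pair of allowed ascent values slides downward by one at each step.

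First I would set up notation: for an integer $a$ with $0\le a\le r-1$, let $W_a$ denote the (multiset) sum $\sum_w w$ over all length-$r$ hook words $w$ with $\ascents(w)=a$, so that Lemma~\ref{lem:hrmiei} reads $\bh_{r-m}\be_m = W_m + W_{m-1}$ for $1\le m\le r-1$, with the corner cases $\bh_r\be_0 = W_0$ and $\bh_0\be_r = W_{r-1}$ handled by the parenthetical remark in the lemma. Substituting these into the alternating sum above, the term $\bh_{r-(i-j)}\be_{i-j}$ contributes $(-1)^j(W_{i-j}+W_{i-j-1})$. Writing this out, the $j$-th and $(j+1)$-th summands share the term $W_{i-j-1}$ with opposite signs, so the sum telescopes and only the leading term $W_i$ from $j=0$ survives; I would verify that the bottom term at $j=i$ (which is $(-1)^i\bh_r\be_0 = (-1)^i W_0$) cancels against the $W_0$ produced by the $j=i-1$ summand, using the corner-case form of the lemma so that no spurious $W_{-1}$ appears.

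The one place demanding care, and which I expect to be the main (though modest) obstacle, is the bookkeeping at the two boundaries of the telescoping sum. Specifically I must confirm that the corner cases $i=0$ and $i=r$ of Lemma~\ref{lem:hrmiei}—where the ascent number is pinned to a single value $0$ or $r-1$ rather than ranging over a pair—mesh correctly with the general recursion $W_m+W_{m-1}$, so that the cancellation leaves precisely $W_i$ with no leftover boundary term and no reference to an undefined $W_{-1}$ or $W_r$. Since all the identities here are equalities of elements of $\A_k$ (with multiplicity, as the remark following the lemma warns), I would phrase the entire argument as a manipulation of these formal sums of words rather than of nilCoxeter group elements, which keeps the telescoping exact and avoids any collapsing of distinct words that happen to represent the same group element.
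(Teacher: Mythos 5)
Your proposal is correct and is essentially the paper's own argument: the paper likewise substitutes Lemma~\ref{lem:hrmiei} into the alternating sum defining $\bs_{(r-i,1^i)}$ and observes that every word with $\ascents(w)\neq i$ appears in exactly two consecutive terms with opposite signs, leaving only the $\ascents(w)=i$ contribution. Your explicit bookkeeping with the $W_a$ notation and the boundary checks at $j=i$ (and your remark about working with formal sums of words rather than nilCoxeter elements, to respect multiplicities) simply spells out what the paper states in one line.
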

\begin{proof}
  From our definition of the noncommutative Schur functions indexed by a hook partition,
  it follows that
  $$\bs_{(r-i,1^i)} = 
  \bh_{r-i} \be_i - \bh_{r-i+1} \be_{i-1} + \cdots + (-1)^i \bh_r.$$
  Hence by Lemma~\ref{lem:hrmiei} the only words which do not appear in two
  terms with opposite signs are those that have $\ascents(w) = i$, which
  implies the corollary.
\end{proof}

\begin{example}
Let $k=3$. Then for $r=3$ and $i=1$ we have
\begin{multline*}
  \bs_{2,1} = u_1 u_0 u_1 + u_2 u_1u_2 + u_3 u_2 u_3 + u_0 u_3 u_0\\
  +  u_1 u_3 u_0 + u_1 u_0 u_2 + u_2 u_0 u_1 + u_2 u_1 u_3
  + u_3 u_1 u_2  + u_3 u_2 u_0 + u_0 u_2 u_3 + u_0 u_3 u_1.
\end{multline*}
\end{example}

We can now write an expression for $\bp_r$ by using the definition.
\begin{corollary}
  For $1\le r \le k$,
    $$\bp_r = \sum_w (-1)^{\ascents(w)} w,$$
  where the sum is over all words $w$ satisfying $(1')$ and $(2')$ in the canonical order.
\end{corollary}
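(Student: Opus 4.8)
The plan is to substitute the word expansion of the hook Schur functions from Corollary~\ref{cor:hookschur} directly into the defining relation for $\bp_r$. By definition,
\[
  \bp_r = \sum_{i=0}^{r-1} (-1)^i \bs_{(r-i,1^i)},
\]
so the first step is simply to replace each summand $\bs_{(r-i,1^i)}$ by its expression as a sum over all length-$r$ hook words $w$ (in the canonical order) with $\ascents(w) = i$.

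After this substitution, I would reorganize the resulting double sum. The key observation is that the ascent statistic partitions the collection of all length-$r$ hook words: each word $w$ satisfying $(1')$ and $(2')$ has a well-defined value $\ascents(w)$ lying in $\{0,1,\ldots,r-1\}$. This is immediate from the hook structure $b_l > \cdots > b_1 > a_1 \le \cdots \le a_m$ of a word of length $r = l + m$, whose ascent count $m-1$ runs from $0$ (when $m = 1$, a strictly decreasing word) up to $r-1$ (when $m = r$, a weakly increasing word). Hence the inner sums indexed by $i = 0, \ldots, r-1$ range over disjoint classes whose union is exactly the set of all length-$r$ hook words, and the word in class $i$ carries the sign $(-1)^i = (-1)^{\ascents(w)}$.

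Collapsing these classes then merges the double sum into the single sum
\[
  \bp_r = \sum_w (-1)^{\ascents(w)} w
\]
over all $w$ satisfying $(1')$ and $(2')$, which is the claim.

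The argument is essentially a bookkeeping reindexing, so I do not anticipate a serious obstacle. The only point requiring care is confirming that the ascent value of every length-$r$ hook word indeed falls in the range $\{0,\ldots,r-1\}$, so that the union over $i$ covers each hook word exactly once with neither omission nor double counting; the upper index $r-1$ in the definition of $\bp_r$ matches this range precisely. Since Corollary~\ref{cor:hookschur} and the commutativity of the noncommutative symmetric functions are already in hand, no further manipulation inside the nilCoxeter algebra is needed.
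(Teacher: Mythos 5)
Your proposal is correct and is essentially the paper's own argument: the paper also derives this corollary by substituting the word expansion of Corollary~\ref{cor:hookschur} into the definition $\bp_r = \sum_{i=0}^{r-1}(-1)^i \bs_{(r-i,1^i)}$. Your additional check that every length-$r$ hook word has ascent count in $\{0,\ldots,r-1\}$, so the classes partition the full set of such words, is exactly the (implicit) bookkeeping the paper relies on.
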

\begin{proof}
  This follows immediately from the definition
  $$\bp_r = \sum_{i=0}^{r-1} (-1)^i \bs_{(r-i,1^i)} \; . \qedhere$$ 
\end{proof}

In fact, we may restrict our attention to those words in the sum also satisfying
$(3')$ because it is possible to show that those not satisfying $(3')$ will cancel.
\begin{lemma}
  For $r\le k$,
  $$\bp_r = \sum_w (-1)^{\ascents(w)} w,$$
  where the sum is over all words $w$ satisfying $(1')$, $(2')$, and $(3')$.
\end{lemma}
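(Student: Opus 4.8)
The plan is to build directly on the preceding corollary, which already expresses $\bp_r=\sum_w(-1)^{\ascents(w)}w$ as a signed sum over \emph{all} hook words $w$ of length $r$ in the canonical order, and to show that every term violating the connectedness condition $(3')$ cancels inside $\A_k$. Grouping the sum according to the support $\supp(w)=S$, it suffices to prove that for each $S\subsetneq[0,k]$ that is \emph{not} an interval in $I_S$, the partial sum $\sum_{w:\,\supp(w)=S}(-1)^{\ascents(w)}w$ vanishes. To set this up I would first record a convenient parametrization: since a hook word uses each letter of $S$ exactly once, it is determined by splitting $S$ into a decreasing block $D$ (written first, in decreasing $I_S$-order) and an increasing block $A$, and the hook condition is equivalent to requiring the $I_S$-minimum of $S$ to lie in $A$. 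Writing $\alpha=|A|$, one has $\ascents(w)=\alpha-1$.

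The next step exploits disconnectedness. Write $S=S_1\sqcup\cdots\sqcup S_m$ as its maximal cyclic intervals, so that $m\ge 2$ exactly because $S$ is not an interval. Letters in distinct $S_j$ are non-adjacent on the circle and hence commute, so as an element of $\A_k$ the word factors as $w=\prod_j w|_{S_j}$, each $w|_{S_j}$ again being a decreasing block followed by an increasing block. Setting $\alpha_j=|A\cap S_j|$, one gets $\ascents(w)=\sum_j\alpha_j-1$, and the whole signed sum factors as $-\prod_j\bigl(\sum(-1)^{\alpha_j}\,w|_{S_j}\bigr)$, where the inner sum over the block $S_{j_0}$ containing $\min_{I_S}S$ is constrained to assignments placing that minimum in the increasing part, while for every other block the assignment of its letters is completely free.

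I would then annihilate each free factor by a sign-reversing involution: toggle the membership of $\min_{I_S}S_j$ between the increasing and decreasing parts of $w|_{S_j}$. Because this letter is the smallest in $S_j$, it sits at the junction of the two blocks in either case — it is the last letter of the decreasing block or the first letter of the increasing block, i.e. the same slot — so the literal word, and hence the algebra element $w|_{S_j}$, is unchanged, whereas $\alpha_j$ changes by one and the sign flips. This involution is visibly fixed-point free, so each free factor equals $0$; since $m\ge 2$ there is always a block $S_j$ with $j\ne j_0$, the product vanishes, and only the $k$-connected hook words survive.

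The step I expect to be the main obstacle is the verification inside this last involution that toggling the block-minimum truly fixes the algebra element rather than merely a related one. The argument hinges on the toggled letter occupying precisely the decreasing/increasing boundary, a property special to the minimum, so one must confirm both that the underlying word is literally unchanged and that no other letter can substitute in this role. This is also the point where the hook-word constraint must be handled with care: it forces the global minimum into the increasing part, which is why the distinguished block $S_{j_0}$ carries a constrained sum whose factor is \emph{not} asserted to vanish, and the whole reduction rests on there being at least one genuinely free block alongside it.
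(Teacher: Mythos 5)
Your strategy---group the non-$k$-connected terms by support $S$, factor each word across the maximal cyclic intervals $S_1,\ldots,S_m$ of $S$ using commutativity of non-adjacent generators, and annihilate a free factor by toggling the block minimum---is essentially the Fomin--Greene sign-reversing involution, which is all the paper invokes (its proof is a one-sentence citation of the involution described before Theorem~5.1 of Fomin--Greene), so you are reconstructing the right argument in more detail. There is, however, a genuine gap in your parametrization. A hook word here need \emph{not} use each letter of $S$ exactly once: these words come from products $\bh_{r-i}\be_i$ of a cyclically decreasing and a cyclically increasing element whose supports may overlap, so a letter may occur once in the decreasing block $D$ and once in the increasing block $A$. (The paper's own example for $k=3$ lists $u_1u_0u_1$ as a summand of $\bs_{2,1}$.) The correct indexing is by pairs of subsets $D,A\subseteq S$ with $D\cup A=S$, not necessarily disjoint, subject to $\min_{I_S}S\in A\setminus D$, with $\ascents(w)=|A|-1$. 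Your sum over ``splittings'' of $S$ therefore omits every term with $D\cap A\neq\emptyset$, and your involution is undefined exactly where this bites: on a free block $S_j$ the toggle of $m_j=\min_{I_S}S_j$ makes no sense when $m_j\in D_j\cap A_j$.

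The repair is short but must be made. Keep the block factorization and the sign bookkeeping $(-1)^{\ascents(w)}=-\prod_j(-1)^{|A_j|}$, which survive unchanged under the non-disjoint parametrization. For the orphaned terms with $m_j\in D_j\cap A_j$ on a free block, observe that inside the factor $w|_{S_j}$ the two copies of $m_j$ become adjacent (one is the last letter of the decreasing part of $w|_{S_j}$, the other the first letter of its increasing part, precisely because $m_j$ is the minimum of $S_j$), so $u_{m_j}^2=0$ kills that factor and hence the whole term. The toggle then pairs off the remaining terms of the free factor exactly as you describe, that factor vanishes, and since $m\geq 2$ guarantees the existence of a free block the product vanishes. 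Incidentally, the step you flagged as the main obstacle---that the toggled letter occupies the junction of $w|_{S_j}$ in either case, so $w|_{S_j}$ is literally unchanged---is correct as you argued; the real difficulty is the repeated-letter issue above.
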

\begin{proof}
  Since each canonical interval can be viewed as an interval of the finite
  nilCoxeter group, the sign-reversing involution described
  before~\cite[Theorem 5.1]{FG:1998} still holds and there is a sign-reversing
  involution on the terms which do not satisfy $(3')$.  Hence it suffices to
  sum only over terms which are connected cyclic intervals.
\end{proof}

\begin{example}
Let $k=3$. Then
\[
  \bp_2 = u_1 u_0 + u_2 u_1 + u_3 u_2 + u_0 u_3 - ( u_1 u_2
  + u_2 u_3 + u_3 u_0 + u_0 u_1).
\]
\end{example}

Theorem~\ref{thm:main_cores} now follows from the action of words on
$\bs_\lambda^{(k)}$ given by Equation~\eqref{e:f_on_s}.

\section{Equivalence of main theorems}
\label{s:equivalence}
To show the equivalence of Theorems~\ref{thm:main} and~\ref{thm:main_cores}, 
we will show that a $k$-bounded partition $\mu$ satisfies conditions
$(0)$ through $(4)$ of Definition~\ref{def:kribbon} if and only if there exists a
unique $w$ such that the pair $(w,\mu)$ satisfies conditions $(1')$ through
$(4')$ of Theorem~\ref{thm:main_cores}, and that such a $w$ will satisfy
$\ascents(w) = \height(\mu/\lambda)$.  

\subsection{Primed implies unprimed}

We begin by showing that conditions $(1')$ through $(4')$ of Theorem~\ref{thm:main_cores}
imply conditions $(0)$ through $(4)$ of Definition~\ref{def:kribbon}.

The first two lemmas will be important to show the correspondence between ascents in hook
words $\ascents(w)$ and the height of vertical strips $\height(\mu/\la)$,
and also for the understanding of the statistics in condition (4) of
Definition~\ref{def:kribbon}. 

\begin{lemma}
\label{lem:hook}
Let $u\in\A_k$ with $\supp(u) \subsetneq [0,k]$ and let $\mathcal{I}$ be the canonical interval
with respect to $\supp(u)$.
\begin{enumerate}
\item \label{i:hook_to_hook}
Suppose $u$ has a reduced word $\gamma_1\ldots \gamma_m \delta_1\ldots \delta_\ell$ 
such that $\gamma_1<\cdots <\gamma_m > \delta_1 > \cdots > \delta_\ell$ in $\mathcal{I}$.
Then $u$ also has a reduced word
$\beta_1 \ldots \beta_\ell \alpha_1 \ldots \alpha_m$ such that
$\beta_1 >\cdots >\beta_\ell >\alpha_1 <\cdots < \alpha_m$ in $\mathcal{I}$.
\item \label{i:unique}
If $u$ is $k$-connected and has a reduced word which is a hook word in $\mathcal{I}$,
then this hook word is unique.
\end{enumerate}
\end{lemma}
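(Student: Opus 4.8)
The plan is to read both parts as statements about reduced words in the finite-type situation obtained by restricting to the canonical interval $\mathcal{I}$: since $\supp(u)\subsetneq[0,k]$ is proper, the only relations in play are the braid relation $u_iu_{i+1}u_i=u_{i+1}u_iu_{i+1}$ and the commutation $u_iu_j=u_ju_i$ for $|i-j|\ge 2$ from~\eqref{e:braid}, and all letters are linearly ordered by $\mathcal{I}$. I will refer to a reduced word of the shape $\gamma_1<\cdots<\gamma_m>\delta_1>\cdots>\delta_\ell$ as a \emph{peak} and to a hook word $\beta_1>\cdots>\beta_\ell>\alpha_1<\cdots<\alpha_m$ as a \emph{valley}.

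For part~(\ref{i:hook_to_hook}) I would first reduce to the $k$-connected case. The support splits into maximal runs of consecutive elements of $\mathcal{I}$, and letters in different runs differ by at least $2$, hence commute. Restricting the given peak to a single run preserves its up-then-down shape, so $u$ factors, up to free commutation, as a product of peak words, one per run. After converting each to a valley word on its own run, the runs reassemble: since distinct runs commute freely, all descending letters can be gathered into one strictly decreasing block and all ascending letters into one strictly increasing block, and the result is again reduced and of valley shape. The inequality $\beta_\ell>\alpha_1$ survives because the global minimum of $\supp(u)$ is the valley bottom of its own run, hence lies in the ascending block.

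For a single run (consecutive letters) I would induct on $\ell$. Peel off the top descent letter $\delta_1$ and transport it leftward through the ascending run: it commutes past each $\gamma_i$ with $|\gamma_i-\delta_1|\ge 2$, and when it is blocked by an adjacent value it is resolved by one braid move of the form $(\delta_1)(\delta_1{+}1)(\delta_1)\mapsto(\delta_1{+}1)(\delta_1)(\delta_1{+}1)$ or its mirror image. This produces a word $\beta_1\,\gamma_1'\cdots\gamma_m'$ with a single leading descent followed by a strictly increasing run, and one recurses on the shorter peak $\gamma_1'\cdots\gamma_m'\delta_2\cdots\delta_\ell$. The delicate point, and the step I expect to be the main obstacle, is the bookkeeping: (i) determining exactly where $\delta_1$ comes to rest, which is a finite case analysis according to whether $\delta_1$, $\delta_1-1$, or $\delta_1+1$ occurs in the ascending run and hence whether a braid is needed (one must be careful not to slide $\delta_1$ too far, or the descent count drops); and (ii) checking that the successive leading descents produced by the recursion are strictly decreasing, so that the accumulated word is a genuine valley with the correct run lengths $\ell$ and $m$.

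For part~(\ref{i:unique}), where $k$-connectedness makes $\supp(u)$ a genuine interval in $\mathcal{I}$ with largest element $N$, I would prove uniqueness by recovering the descending run $D=\{\beta_1,\dots,\beta_\ell\}$ and the ascending run $A=\{\alpha_1,\dots,\alpha_m\}$ from $u$ itself. The key observation is the rule $N\in D\iff N$ is a left descent of $u$, and $N\in A\iff N$ is a right descent of $u$. Connectedness is essential here: because $N-1\in\supp(u)$, the generator $u_N$ can be commuted to the front (resp.\ the back) only when $N-1$ lies in $D$ (resp.\ in $A$), there being no $u_{N+1}$ available for a braid; without connectedness $u_N$ could commute freely and the rule would fail, which is precisely why a non-connected element such as $u_2u_5u_6$ admits several hook words. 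Granting the rule, both the first and the last letters of any hook word of $u$ are forced, so one peels the extreme occurrence of $N$ (from the left if $N\in D$, otherwise from the right), lands on a shorter $k$-connected element, and concludes by induction on $\len(u)$.
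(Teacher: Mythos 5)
Your part~(2) is essentially the paper's argument: induct on $\len(u)$, show that the position of the maximal letter $N$ of the interval $\supp(u)$ inside any hook word is forced by $u$ itself, peel it off, and recurse. The paper justifies the forcing by a case analysis on whether the given hook word contains one or two copies of $N$, using exactly your observation that no $N+1$ is available for a braid; so this half is sound, provided your left/right-descent criterion is argued at the same level of detail.

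Part~(1) has two genuine gaps. The first is in the mechanism itself: in the generic case it is \emph{not} $\delta_1$ (nor a braid image of it) that ends up at the front of the valley word, but the smallest ascending letter exceeding $\delta_1$, which must be commuted to the front while $\delta_1$ stays behind inside the ascending run. Take $k=3$ and $u=u_1u_3u_2$, with $k$-connected support $\{1,2,3\}$: the peak word $132$ (i.e.\ $1<3>2$) is equivalent to the valley $312$ (i.e.\ $3>1<2$) via the single commutation $u_1u_3=u_3u_1$; the letter $2$ never moves, no braid relation is applicable, and your procedure stalls at $132$, which is not of the claimed form ``one leading descent followed by an increasing run.'' This bumping step is exactly what the paper supplies by running Edelman--Greene insertion: insert $\delta_\ell$ into the increasing row, send the smallest row entry $\alpha_i'$ exceeding $\delta_\ell$ to the column as the new $\beta_\ell$, and replace it in the row by $\delta_\ell$ except in the special $a(a+1)a$ configuration; the inequality $\beta_\ell<\beta_{\ell-1}$ then follows from $\beta_{\ell-1}>\delta_{\ell-1}\ge\alpha_i'$. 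The second gap is your preliminary reduction to the $k$-connected case: converting each commuting run separately does not preserve the run lengths $(\ell,m)$, which the lemma asserts and which Proposition~\ref{prop:sign} relies on. For $k=5$ and the peak $451$ (so $\gamma=(4,5)$, $\delta=(1)$, $m=2$, $\ell=1$, runs $\{1\}$ and $\{4,5\}$), the run $\{1\}$ contributes a lone descent letter with no ascending letter below it, and reassembling run-by-run yields $145$ with $(\ell,m)=(0,3)$, whereas the correct valley is $415$ with $(\ell,m)=(1,2)$ --- the letter $4$ has migrated from the ascending to the descending part. Letters change roles across the peak-to-valley conversion, so one cannot split the support first; the paper avoids this by working with the whole word, and indeed never needs connectedness for part~(1).
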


\begin{proof}
The statement~\eqref{i:hook_to_hook} follows directly from the Edelman-Greene 
insertion~\cite{EG:1987} by induction on $\ell$.  We think of the two reduced words as 
the following hook tableaux
\[
  \tableau[sbY]{\gamma_1,\ldots,\gamma_m|
                           \bl,\bl,\delta_1|
                           \bl,\bl,\vdots|
                           \bl,\bl,\delta_\ell}
 \qquad \text{and} \qquad
 \tableau[sbY]{\beta_1| \vdots| \beta_\ell | 
                          \alpha_1,\ldots,\alpha_m} \; .
\]
For $\ell=0$, the statement is trivial since $\gamma_1\ldots \gamma_m=
\alpha_1\ldots\alpha_m$ satisfies both sets of required inequalities. Suppose that the 
statement is true for $\gamma_1\ldots\gamma_m \delta_1\ldots \delta_{\ell-1}$ for $\ell>0$,
namely that this word is equivalent to $\beta_1 \ldots \beta_{\ell-1} \alpha_1'\ldots \alpha_m'$ with
$\beta_1>\cdots>\beta_{\ell-1}>\alpha_1'<\cdots <\alpha_m'$. Now insert $\delta_\ell$ in the
following way: Let $\alpha_i'$ be smallest such that $\delta_\ell<\alpha_i'$. If
$\alpha_{i-1}' \alpha_i' \delta_\ell = a (a+1) a$ for some $a\in \mathcal{I}$, then set 
$\beta_\ell=\alpha_i'$ and $\alpha_j=\alpha_j'$
for all $1\le j\le m$. Otherwise set $\beta_\ell = \alpha_i'$, $\alpha_i=\delta_\ell$ and 
$\alpha_j=\alpha_j'$ for $j\neq i$. It is not hard to see that if $\gamma_1\ldots \gamma_m 
\delta_1 \ldots \delta_{\ell-1}$ and $\beta_1\ldots \beta_{\ell-1} \alpha'_1\ldots \alpha'_m$
are equivalent, then $\gamma_1\ldots \gamma_m \delta_1 \ldots \delta_\ell$ and 
$\beta_1\ldots \beta_\ell \alpha_1\ldots \alpha_m$ are also equivalent and all 
required inequalities are satisfied. (For the inequality $\beta_\ell <
\beta_{\ell - 1}$, observe that $\beta_{\ell -1} > \delta_{\ell -1} \ge
\alpha_i' = \beta_\ell$.)

Statement~\eqref{i:unique} follows in a similar way as~\cite[Lemma 6.8]{LSS:2010}
by induction on $\len(u)$. For $\len(u)\le 3$, the uniqueness of
the hook word follows directly from the braid relations~\eqref{e:braid}.
Now let $\len(u)>3$, $w$ a hook word for $u$, and $M$ the maximal letter in $\supp(u)$.
There are two cases: either $w$ contains one or two letters $M$.

First assume that $w$ contains one $M$. Then $w=Mv$ or $vM$. Without loss of
generality we may assume that $w=Mv$ as the other case is similar. Then
$v$ is a $k$-connected hook word with $\len(v)<\len(w)$, so that by induction $v$ is unique.
By the form of the braid relations~\eqref{e:braid}, every reduced word for $u$
must have a single $M$ which precedes all $M-1$ (since $w$ does not contain any $M+1$
and the only way to obtain two $M$s is to use the braid relation $(M-1)M(M-1) \equiv
M(M-1)M$, but there is no $M-1$ to the left of $M$). Hence $w=Mv$ is the unique hook word.

Now assume that $w$ contains two $M$s, so that $w=MvM$. Suppose that $w'$ is
another hook word for $u$. Then $w'$ must contain an $M$ at the beginning or the end.
Assume without loss of generality that $w'=Mv'$. Again by induction, the hook word $v'$
is unique. Since $vM$ is also a hook word equivalent to $v'$, we must have that
$v'=vM$, which implies that $w'=w$.
\end{proof}

\begin{lemma} \label{lem:vertstrips}
  Let $\lambda \in \Park{}$ and $u \in \A_k$ with reduced word $(a-1) w a$, where $w$
  contains neither $a$ nor $a-1$ and $u \cdot \lambda = \mu \neq 0$.
  Then the cell in $\mu / \lambda$ corresponding to $a-1$ occurs
  directly above the cell in $\mu / \lambda$ corresponding to $a$.
\end{lemma}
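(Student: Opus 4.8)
The plan is to follow the action~\eqref{e:nilcoxeter_action} of the reduced word $(a-1)\,w\,a$ on the $(k+1)$-core $\kappa=\core_{k+1}(\la)$, reading the letters from right to left: first $u_a$, then the letters of $w$, and finally $u_{a-1}$. The only bookkeeping device I need is that the cell directly above $(i,j)$ is $(i+1,j)$, whose content $j-(i+1)$ is one less than the content $j-i$ of $(i,j)$ modulo $k+1$. Hence the conclusion of the lemma is equivalent to the assertion that the surviving cell created by the final $u_{a-1}$ and the surviving cell created by the initial $u_a$ lie in a single column, with the former exactly one row above the latter.

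First I would record the effect of the hypothesis on $w$: since $w$ contains neither $a$ nor $a-1$, its action adds no cell of content $a-1$ and destroys no cell of content $a$, so every content-$(a-1)$ cell of $\core_{k+1}(\mu)/\kappa$ is created by the final $u_{a-1}$ and every content-$a$ cell by the initial $u_a$. Writing $C_{a-1}=(p,q)$ for the (topmost) cell added by $u_{a-1}$, the fact that $u_{a-1}$ adds it as an addable corner forces the cell $(p-1,q)$ directly below it to be present in the diagram at the moment $u_{a-1}$ fires; by the content computation this cell has content $a$. Thus there is a content-$a$ cell sitting directly below $C_{a-1}$, present before the last generator acts, so it lies either in $\kappa$ already or was produced by the initial $u_a$.

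The heart of the matter, and the step I expect to be the main obstacle, is to exclude the first possibility: I must show that $(p-1,q)$ is the cell $C_a$ freshly added by $u_a$, and not a cell already present in $\kappa$. Assuming $(p-1,q)\in\kappa$, I would analyse the condition under which $(p,q)$ becomes an addable corner and derive a contradiction with $\len(u)=\len(w)+2$ in each case: if $(p,q)$ is already addable in $\kappa$ the leading letter $a-1$ can be absorbed, shortening the word, while if its left neighbour must instead be supplied by $w$ one invokes that $u_{a-1}$ fails to commute past the relevant letters of $w$, again forcing a drop in length. The natural tools here are length-additivity in the weak order together with the non-commutation braid $u_{a-1}u_au_{a-1}=u_au_{a-1}u_a$. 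Granting this, matching topmost cell to topmost cell gives $(p-1,q)=C_a$, so $C_{a-1}=(p,q)$ sits directly above $C_a$. Two subsidiary points concentrate the remaining technical work: the \emph{topmost-survivor} bookkeeping, which I would settle by checking that the vertical pairing of the content-$(a-1)$ cells added by $u_{a-1}$ with the content-$a$ cells beneath them is order-preserving from top to bottom; and the degenerate case in which $C_{a-1}$ might fall in the bottom row, with no cell below it, which the same reducedness argument should rule out. A cleaner alternative I would keep in reserve is to encode $\kappa$ by its boundary lattice path and recast the whole argument as a local toggle at the adjacent residues $a-1$ and $a$, falling back on it if the corner-by-corner analysis of the final step proves unwieldy.
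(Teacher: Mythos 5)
Your setup is sound up to the step you yourself flag as the heart of the matter, but the mechanism you propose there does not work. Suppose $(p-1,q)\in\kappa=\core_{k+1}(\la)$, so that the content-$(a-1)$ position $(p,q)$ is already an addable corner of $\kappa$ (or becomes one once $w$ supplies its left neighbour of content $a-2$). Nothing about this contradicts reducedness or $\len(u)=\len(w)+2$: the generator $u_{a-1}$ adds \emph{all} addable corners of content $a-1$, so the word $(a-1)\,w\,a$ still acts by adding one box of the $k$-bounded partition per letter and remains length-additive; the spurious high corner would simply become the topmost (hence surviving) $a-1$, and the conclusion of the lemma would fail. There is no ``absorption'' of the leading letter, and non-commutation of $u_{a-1}$ with a letter $a-2$ of $w$ does not shorten anything. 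What actually forbids the configuration is the $(k+1)$-core condition: as in the paper's proof, a pre-existing content-$a$ cell at $(p-1,q)$ with $(p,q)$ missing, combined with the boundary configuration at the topmost addable content-$a$ corner lying more than one row below, produces a removable border strip whose length is divisible by $k+1$, i.e.\ a forbidden hook length; a second border-strip argument of the same kind shows the topmost added $a-1$ cannot lie below the topmost added $a$. Your ``fallback'' of encoding $\kappa$ by its boundary path and arguing with contents along the rim is in fact the argument that is needed, not an alternative to keep in reserve.

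There is a second omission: even once you know that the topmost added $a-1$ sits directly above the topmost added $a$ \emph{in the core}, you are not done, because $\core_{k+1}^{-1}$ preserves rows but not columns. The two boxes land in consecutive rows of the $k$-bounded partition, but they form a vertical domino there only if the number of discarded cells (hook length exceeding $k$) agrees in those two rows. The paper closes this by noting that vertically adjacent cells in the two rows have hook lengths differing by exactly $1$, and since no hook length equals $k+1$ in a $(k+1)$-core, each vertical domino in these rows is either wholly kept or wholly discarded by the bijection. Your proposal declares the core statement ``equivalent'' to the lemma and never addresses this transfer.
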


\begin{proof}
  Recall that by~\eqref{e:nilcoxeter_action} a generator $u_i$ of $\A_k$ acts on 
  $(k+1)$-cores by adding all available boxes of residue $i$. On the $k$-bounded
  partition this amounts to adding one box (which corresponds to the topmost
  added box on the core).
  
  To show the claim of the lemma, we first show that in the core, the topmost
  added $a-1$ cannot be more than one square above the topmost added $a$.
  Suppose it were.  In that case, we consider $\core_{k+1}(\la)$ near the place
  where the topmost $a-1$ will be added.  At that location, we must have the
  following configuration:
  \[
    \tableau[sbY]{*|a}
  \]
  where the bold border represents a cell \emph{not} present in $\core_{k+1}(\la)$.

  Furthermore, we must have the following configuration at the point where
  the topmost $a$ will be added to $\core_{k+1}(\la)$:
  \[
    \tableau[mbY]{a\!-\!1,*|a,a\!+\!1} 
  \]
  (The $a$ to be added cannot be in the first column because it is below the
  diagram above).  But this means there is a removable border strip from the
  $a$ in the first diagram to the $a-1$ in the second diagram.  The length of
  this strip is a multiple of $k+1$, which is a contradiction with being a
  $(k+1)$-core.  So the topmost added $a-1$ cannot be more than one cell above
  the topmost added $a$.
  
  Now we show that the topmost added $a-1$ cannot be below the topmost added
  $a$.  Again, assume the contrary.  After adding the cells corresponding to
  $w a$, the core must be in the following configuration near the topmost added $a$:
  \[
  \tableau[mbY]{*,*|a\!-\!1,a|}
  \]
  (In particular, the cell above the $a$ cannot be an addable cell, since we are
  assuming the topmost added $a-1$ will be below the topmost added $a$.)  At the same
  time, the border near where the topmost $a-1$ will be added must look like
  this:
  \[
  \tableau[mbY]{a\!-\!2,*|a\!-\!1,a}
  \]
  Thus we have a removable border strip from the $a-1$ in the top diagram to
  the $a-2$ in the bottom diagram, whose length is again a multiple of $k+1$.

  We now have that, when multiplying a core by $(a-1) w a$, the topmost added
  $a-1$ must sit directly above the topmost added $a$.  Therefore the cells
  added to the partition are necessarily in consecutive rows.  It remains to
  verify that they are in the same column.  For this, we appeal to the
  bijection $\core_{k+1}$ between $(k+1)$-cores and $k$-bounded partitions.  
  Let $y, y'$ be the rows of
  the topmost added $a-1$ and $a$, respectively.  The boxes added to the
  partition will end up in different columns if and only if the number of cells
  in $y$ with hook length greater than $k+1$ is different from the number of
  cells in $y'$ with hook length greater than $k+1$.  But every pair of
  adjacent cells with one from $y$ and one from $y'$ have hook-lengths
  differing by exactly 1.  Since no cell in a $(k+1)$-core can have hook length
  exactly $k+1$, every vertical domino in the rows $y, y'$ is either
  completely destroyed or completely preserved under the bijection,
  proving the assertion.
\end{proof}

Ultimately, we are interested in $k$-connected hook words $w$.  Such words can
be written as $w = H V$, where $H$ is a horizontal strip (strictly decreasing),
$V$ is a vertical strip (strictly increasing), and the smallest letter in $w$
is part of $V$.  The strict increase/decrease follows from the fact that we
are working in the affine nilCoxeter algebra and hence consecutive repeated letters
annihilate any partition. We can further factor $V$ into maximal segments
$v^{(i)}$ consisting of consecutive letters as
\begin{equation}
\label{e:vertical}
  V = v^{(j)} v^{(j-1)} \cdots v^{(1)} .
\end{equation}

\begin{corollary} \label{cor:vertstrips}
  Let $\la\in \Park{}$ and $V = v^{(j)} v^{(j-1)} \cdots v^{(1)}$ as described above, such that
  $V \cdot \la \neq 0$.  Then each $v^{(i)}$ adds a connected vertical 
  strip to $(v^{(i-1)}v^{(i-2)} \cdots v^{(1)}) \cdot \la$.  Futhermore, these
  strips are disjoint from one another; that is, $(V \cdot \lambda) / \lambda$
  consists of $j$ connected components.
\end{corollary}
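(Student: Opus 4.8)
The plan is to establish the corollary in two parts, matching its two assertions: first that each segment $v^{(i)}$ contributes a connected vertical strip, and then that distinct segments contribute non-adjacent (hence separate) strips. Throughout I would use the basic fact that the nilCoxeter action only ever adds cells: whenever $u_i\cdot\nu\neq 0$ one has $\nu\subseteq u_i\cdot\nu$ with exactly one new cell in the associated $k$-bounded partition (the topmost box surviving under $\core_{k+1}^{-1}$), so the chain of partitions obtained by reading $V$ one letter at a time is nested and a cell, once placed, never moves. In particular, since $V\cdot\la\neq 0$, every intermediate action is nonzero and $\len(V)$ distinct cells are added in total, so the strips are automatically cell-disjoint and it only remains to control their shapes and their mutual adjacency.

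For the first assertion I would fix $i$, set $\rho=(v^{(i-1)}\cdots v^{(1)})\cdot\la$, and analyze $v^{(i)}\cdot\rho$, where $v^{(i)}=c\,(c{+}1)\cdots(c{+}t)$ is a run of letters consecutive in the canonical order. Since the action applies the rightmost generator first, the cells for the residues $c{+}t,\,c{+}t{-}1,\,\ldots,\,c$ are placed in this order. For each $p$ I would isolate the two-letter element $u_{c+p}u_{c+p+1}$ acting on the partition present just before the cell for $c{+}p{+}1$ is added; this has reduced word $(c{+}p)(c{+}p{+}1)$, i.e.\ of the form $(a{-}1)\,w\,a$ with $w$ empty, so Lemma~\ref{lem:vertstrips} applies and shows that the cell for $c{+}p$ sits directly above the cell for $c{+}p{+}1$. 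Because cells do not move once placed, chaining these relations over $p=0,\ldots,t-1$ exhibits the cells of $v^{(i)}$ as a single column of consecutive rows, that is, a connected vertical strip added to $\rho$.

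For the second assertion I would argue by contents. Any two edge-adjacent cells of a diagram have contents differing by exactly $1$, hence residues (contents modulo $k+1$) differing by $\pm1$. The cells contributed by a given segment carry residues forming a run of consecutive integers in the canonical order, and by maximality of the segments the residue runs of two different segments are separated by at least one residue absent from $\supp(V)$. Since the canonical interval $I_{\supp(V)}$ is an honest (non-wrapping) interval obtained by deleting the smallest missing residue $a$, two residues coming from different segments differ by at least $2$ and never by $\pm1$ modulo $k+1$; the only potential wraparound, between the two endpoints of $I_{\supp(V)}$, is blocked precisely because $a$ lies between them. Therefore no cell of one segment is edge-adjacent to a cell of another, and combined with the first assertion each segment lies in a single connected component while distinct segments lie in distinct components, so $(V\cdot\la)/\la$ has exactly $j$ connected components.

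The routine part is the first assertion, which is just an iteration of Lemma~\ref{lem:vertstrips} with empty intermediate word. The main obstacle, and the place needing care, is the content bookkeeping in the second assertion: one must verify that passing to residues modulo $k+1$ cannot accidentally make cells from different segments adjacent, which reduces to checking that the canonical interval genuinely does not wrap around—this is where the hypothesis $\supp(V)\subsetneq[0,k]$, guaranteeing a missing residue $a$, is essential. A secondary point to pin down is the monotonicity used above, namely that adding the later cells (the remaining letters of $v^{(i)}$ and the other segments) never disturbs the relative position of cells already placed; this follows from the nestedness of the partitions in the chain.
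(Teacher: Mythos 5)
Your treatment of the first assertion---iterating Lemma~\ref{lem:vertstrips} with empty intermediate word over the consecutive pairs inside each $v^{(i)}$, and using that the partitions in the chain are nested so cells never move---is exactly the paper's argument and is fine. The problem is in your disjointness argument, which conflates the $(k+1)$-core with the $k$-bounded partition. The identity ``the cells added by $u_x$ have residue $x$'' holds only in the core: the single box that survives in the $k$-bounded partition under $\core_{k+1}^{-1}$ generally does \emph{not} have content $\equiv x \pmod{k+1}$ (in the paper's own example, $u_2$ acting on $(3,2,1,1)$ for $k=3$ adds the partition cell $(2,3)$, of content $1$). So your content bookkeeping proves, at best, that cells coming from different segments are never edge-adjacent in $\core_{k+1}(V\cdot\la)/\core_{k+1}(\la)$. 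The corollary, however, asserts disjointness in $(V\cdot\la)/\la$, and the bijection $\core_{k+1}^{-1}$ does not preserve non-adjacency: it deletes the cells of hook length greater than $k$ and thereby shifts each row left by an amount that varies from row to row, so two core cells in adjacent rows and far-apart columns can land in the same column of the partition and form a vertical domino there.

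Concretely, horizontal contact is not the issue (the skew core meets each row in a contiguous block, so your residue argument does correctly show that two distinct segments never share a row); the dangerous case is vertical contact between the top cell of the strip of $v^{(i)}$ and the bottom cell of the strip of a later segment sitting in the row immediately above, whose core representatives need not be adjacent. The paper closes exactly this hole with an extra step you are missing: the topmost cell added by $v^{(i)}$ has residue $a-s$, no residue $a-s\pm 1$ is added afterwards, so that cell remains a removable corner of the core and hence of the partition---and a removable corner has nothing directly above it. You need to supply some argument of this kind (or another device for transporting non-adjacency from the core to the partition) before the second assertion is proved.
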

\begin{proof}
  Let $v^{(i)} = (a-s) \cdots (a -1) a$.  Each pair $(a-r-1)(a-r)$ for $0\le
  r<s$ must correspond to a vertical domino in the skew $k$-bounded partition
  by Lemma~\ref{lem:vertstrips}.  This proves the first statement.  
  
  Now, consider any two sections of this word $v^{(\ell)}, v^{(i)}$ with $\ell >
  i$.  First note that no core-cell added by $v^{(\ell)}$ can be in any of the rows 
  containing the topmost
  vertical strip corresponding to $v^{(i)}$: this is because the lowest
  addable residue in these rows is $a+1$ which cannot appear in $v^{(\ell)}$
  by the verticality of $V$.  Thus we only have to consider the case where the
  bottom of the topmost vertical strip added by $v^{(\ell)}$ occurs in the row
  immediately above the topmost $a-s$ added by $v^{(i)}$. But notice that the
  cells of residue $a-s$ added by $v^{(i)}$ to the core have remained as a
  removable residue: no residue $a-s+1$ or $a-s-1$ has been added since.  Thus
  the corresponding partition cell must remain as a removable cell in the
  partition.  
\end{proof}

Given a skew partition $(V \cdot \lambda) / \lambda$ of the form of the previous
corollary, we call the cell directly above the vertical strip corresponding to
$v^{(i)}$ a \emph{cap} for $v^{(i)}$.  In the following lemma, we show that
if we apply a $k$-connected hook word to $\lambda$, every vertical strip
except the last will have a cap. 

\begin{lemma} \label{lem:caps}
  Let $w=H V$ be a $k$-connected hook word with $V$ as in~\eqref{e:vertical}.
  Suppose $v^{(i)} = a (a+1) \cdots (a+s)$ for a fixed $i$. If $i=j$, then
  $v^{(j)}$ does not have a cap. If $i<j$, then
  \begin{enumerate}
    \item $a-1$ occurs as a letter in $H$, and
    \item this $a-1$ forms a cap for $v^{(i)}$.
  \end{enumerate}
\end{lemma}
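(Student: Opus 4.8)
The plan is to reduce both assertions to Lemma~\ref{lem:vertstrips}, after first pinning down whether and where the letter $a-1$ occurs in $w$. The geometric input is this: by Corollary~\ref{cor:vertstrips} (equivalently, by Lemma~\ref{lem:vertstrips} applied to the consecutive pairs of $v^{(i)}=a(a+1)\cdots(a+s)$) the strip added by $v^{(i)}$ is a column whose cells, read from top to bottom, have residues $a,a+1,\ldots,a+s$; in particular its top cell has residue $a$. Since a cell lying directly above a cell of content $c$ has content $c-1$, the cap position of $v^{(i)}$ has residue $a-1 \pmod{k+1}$. As only the generator $u_{a-1}$ can create a cell of this residue, the run $v^{(i)}$ has a cap only if $a-1\in\supp(w)$, and then only if that $u_{a-1}$ adds a cell above the strip.

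First I would settle the support question using $k$-connectedness. Write $S=\supp(w)$ and let $I_S$ be the canonical interval. Because the smallest letter of $w$ belongs to $V$ and $v^{(j)}$ is the first run of $V$ in~\eqref{e:vertical}, the initial letter of $v^{(j)}$ is the $I_S$-minimum of $S$; consequently $a-1\notin S$ (it is either the single index omitted from $I_S$ or precedes every element of $S$ in $I_S$), so no $u_{a-1}$ occurs and $v^{(j)}$ has no cap. This is the case $i=j$. If instead $i<j$, then the initial letter $a$ of $v^{(i)}$ is not the $I_S$-minimum of $S$, so by Definition~\ref{definition:kconnected} (that $S$ is an interval in $I_S$) the immediate $I_S$-predecessor of $a$, namely $a-1$, lies in $S$. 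Hence $u_{a-1}$ occurs in $w$. It cannot lie in $V$: it is not in $v^{(i)}$, since $v^{(i)}$ is a \emph{maximal} block of consecutive letters starting at $a$, and it is not in any other block, since such a block together with $v^{(i)}$ would merge into one longer block of consecutive letters. Therefore $a-1$ is a letter of $H$, which is assertion~(1).

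For assertion~(2) I would locate the cell this $a-1$ adds (we may assume $w\cdot\lambda\neq 0$, else there is nothing to prove). Factor $H = H_>\,(a-1)\,H_<$ and $V = V_<\,v^{(i)}\,V_>$, where $H_>,H_<$ collect the letters of $H$ larger, resp.\ smaller, than $a-1$ in $I_S$, and $V_< = v^{(j)}\cdots v^{(i+1)}$, $V_> = v^{(i-1)}\cdots v^{(1)}$. Then
\[
  w \;=\; H_>\;\bigl[(a-1)\,H_<\,V_<\,a\bigr]\;\bigl[(a+1)\cdots(a+s)\,V_>\bigr].
\]
Put $u = (a-1)\,H_<\,V_<\,a$ and $\lambda^\ast = \bigl[(a+1)\cdots(a+s)\,V_>\bigr]\cdot\lambda$. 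The middle factor $H_<V_<$ of $u$ contains neither $a$ nor $a-1$: the letters of $H_<$ are $<a-1$, and those of $V_<$ are $<a$ with $a-1\notin V$ as just shown. Being a contiguous subword of the reduced word $w$, the word $u$ is reduced, and $u\cdot\lambda^\ast\neq 0$ since $w\cdot\lambda = H_>\cdot(u\cdot\lambda^\ast)\neq 0$. Applying the rightmost part of $u$ up through $V_<$ to $\lambda^\ast$ reconstructs $V_<\,v^{(i)}\,V_>\cdot\lambda = V\cdot\lambda$, whose $v^{(i)}$-strip has its top $a$-cell in place; the subsequent $H_<$ (letters $<a-1$) does not disturb this cell. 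Now Lemma~\ref{lem:vertstrips}, applied to $u$ acting on $\lambda^\ast$, gives that the cell added by the leading $a-1$ sits directly above the cell added by the trailing $a$, that is, directly above the top of the $v^{(i)}$-strip; this is exactly the cap. Finally the prefix $H_>$, whose letters exceed $a-1$, adds no residue-$(a-1)$ cell, so the cap survives in $\mu = w\cdot\lambda$, establishing assertion~(2).

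The support computation of the second paragraph is routine once the canonical order is handled carefully; the main obstacle is the bookkeeping of the third paragraph. One must choose the factorization of $w$ so that the segment strictly between the distinguished $a-1$ and $a$ is free of both $a$ and $a-1$ (so that Lemma~\ref{lem:vertstrips} applies), and one must verify that the residue-$a$ cell produced by the trailing $a$ of $u$ is genuinely the top of the $v^{(i)}$-strip as defined with respect to $V\cdot\lambda$, rather than some unrelated cell of residue $a$.
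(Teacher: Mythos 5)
Your treatment of the case $i<j$ is essentially the paper's argument and is correct: $k$-connectedness of $\supp(w)$ together with the fact that $a$ is not the $I_S$-minimum forces $a-1\in\supp(w)$, maximality of the blocks $v^{(\cdot)}$ forces $a-1\notin V$, and your factorization $w=H_>\bigl[(a-1)H_<V_<a\bigr]\bigl[(a+1)\cdots(a+s)V_>\bigr]$, whose middle segment avoids both $a$ and $a-1$, is exactly the setup needed to invoke Lemma~\ref{lem:vertstrips}; this part is a more explicit version of what the paper does.

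The gap is in the case $i=j$. You argue that the cap position ``has residue $a-1$'' and hence can only be created by $u_{a-1}$. But a cap is a cell of the skew \emph{$k$-bounded partition} sitting directly above the top of the $v^{(j)}$-strip, whereas residues live on the $(k+1)$-core, and the bijection $\core_{k+1}$ shifts each row by a row-dependent amount (the number of cells of hook length greater than $k$ in that row). Two vertically adjacent cells of the $k$-bounded partition therefore need not correspond to vertically adjacent cells of the core, so the letter creating the partition cell directly above the $a$-cell need not be $u_{a-1}$: a priori some letter $h$ of $H$ with $h\neq a-1$ could add a core cell in the row above but in a different core column which, after the row shifts, lands directly above the $a$-cell in the partition. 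This core-versus-partition transfer is exactly the delicate point that the last paragraph of the proof of Lemma~\ref{lem:vertstrips} (and again part of Lemma~\ref{lem:w_existence}) handles in the opposite direction, and you are using the unproved converse. The paper closes this case differently: since $a$ is the smallest letter of $w$, the contiguous factor $Ha$ of $w$ is strictly decreasing in the canonical order, hence cyclically decreasing, and therefore adds a \emph{horizontal} strip to $\bigl[(a+1)\cdots(a+s)\,v^{(j-1)}\cdots v^{(1)}\bigr]\cdot\lambda$ by the $k$-Pieri rule; in particular no cell added by $H$ can sit directly above the cell added by $a$, so $v^{(j)}$ has no cap. You should either substitute this argument or supply a proof of the missing adjacency-transfer statement.
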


\begin{proof}
  First assume that $i<j$.  Since $V$ is a vertical strip and $v^{(i)}$ is of
  maximal length, the letter $(a-1)$ cannot appear anywhere in $V$. Since
  $v^{(i+1)}$ consists of letters smaller than $a$, and $w$ is $k$-connected,
  it must be the case that $(a-1)$ appears somewhere in $H$.  Note that the
  letters appearing between the $a$ in $v^{(i)}$ and the $a-1$ in $H$ are all
  strictly less than $a-1$. Hence by Lemma~\ref{lem:vertstrips}, $a-1$ must be
  a cap for $v^{(i)}$.
  
  We now show that $v^{(j)}$ does not have a cap. In this case $a$ is the
  smallest letter in $w$. Therefore, $H a$ is a horizontal strip in the
  partition, and in particular, the cell directly above the cell corresponding
  to $a$ does not appear. 
\end{proof}

We can now state precisely what happens to the height statistic when we apply
a $k$-connected hook word to a partition $\lambda$.
\begin{proposition} \label{prop:sign}
  Let $\mu,\lambda\in \Park{}$, such that $\mu = w \cdot \lambda$ for a $k$-connected
  hook word $w$ of length $r \le k$. Then
  \begin{enumerate}
  \item \label{i:asc} $\ascents(w) = \height(\mu/ \lambda)$;
  \item \label{i:stat} $\height \left(\mu / \lambda \right) +
                        \height \left(\mu^{(k)} / \lambda^{(k)} \right) = r - 1$.
  \end{enumerate}
\end{proposition}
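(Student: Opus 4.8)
The plan is to prove the two parts in sequence, using the cap-counting machinery of Lemma~\ref{lem:caps} and Corollary~\ref{cor:vertstrips} for part~\eqref{i:asc}, and then bootstrapping part~\eqref{i:stat} from part~\eqref{i:asc} together with a symmetry between $w$ acting on $\la$ and a conjugate word acting on $\la^{(k)}$.

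For part~\eqref{i:asc}, I would first recall the factorization $w=HV$ with $V=v^{(j)}\cdots v^{(1)}$ from~\eqref{e:vertical}, where $H$ is strictly decreasing and each $v^{(i)}$ is a maximal increasing run. The number of ascents of $w$ counts the ``turning points'' where a letter is weakly followed by a larger one; by the hook word structure these ascents are exactly the boundaries \emph{inside} $V$, namely the $j-1$ breaks between consecutive segments $v^{(i+1)}v^{(i)}$, plus one more ascent at the $H$-to-$V$ junction (the smallest letter of $w$ lies in $V$, so the last letter of $H$ is larger than the first letter of $V$). Thus $\ascents(w)=j$ when $V$ is nonempty. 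On the geometric side, Corollary~\ref{cor:vertstrips} tells us that $(V\cdot\la)/\la$ has exactly $j$ connected components, i.e.\ $j$ disjoint vertical strips. I would then argue that appending $H$ (a horizontal strip) does not create any new vertical dominos beyond caps: by Lemma~\ref{lem:caps}, each of the $j-1$ segments $v^{(1)},\ldots,v^{(j-1)}$ receives exactly one cap from a letter of $H$, and these caps each convert the top of a vertical strip into one additional vertical domino, while $v^{(j)}$ receives no cap. Counting vertical dominos via the restated definition of height (occupied rows minus connected components) for the full shape $\mu/\la$, I expect the total to come out to exactly $j=\ascents(w)$, giving $\height(\mu/\la)=\ascents(w)$.

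For part~\eqref{i:stat}, my approach is to relate the action of $w$ on $\la$ to an action on the $k$-conjugate $\la^{(k)}$. Since the $k$-conjugation map is $\core_{k+1}^{-1}(\core_{k+1}(\cdot)^t)$ and transposing a $(k+1)$-core sends a cell of content $c$ to a cell of content $-c$, a generator $u_i$ acting on cores corresponds, after conjugation, to $u_{-i}=u_{k+1-i}$. I would use this to show that if $\mu=w\cdot\la$ with $w$ a $k$-connected hook word, then $\mu^{(k)}=w'\cdot\la^{(k)}$ for the reversed/negated word $w'$, which is again $k$-connected of the same length $r$ but whose hook structure is \emph{complementary}: its ascents and descents are interchanged in a controlled way. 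Applying part~\eqref{i:asc} to both $w$ and $w'$ gives $\height(\mu/\la)=\ascents(w)$ and $\height(\mu^{(k)}/\la^{(k)})=\ascents(w')$, so it suffices to prove $\ascents(w)+\ascents(w')=r-1$. I would establish this last identity purely combinatorially: a hook word of length $r$ has $r-1$ adjacent pairs, each of which is either an ascent or a descent, and the negation/reversal sending $w\mapsto w'$ swaps the roles of ascents and descents at every junction, forcing the two ascent counts to sum to the total number of junctions, namely $r-1$.

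The main obstacle I anticipate is the second part, specifically verifying rigorously that $k$-conjugation intertwines the $w$-action with the $w'$-action and that the hook-word property is preserved under the induced transformation on words. The subtlety is that $k$-conjugation is defined through the nontrivial bijection $\core_{k+1}$ rather than being a naive operation on $k$-bounded partitions, so I must track how the canonical interval $I_S$ and the resulting ascent statistic transform when residues are negated modulo $k+1$; in particular I need the complementary interval to produce exactly the reversal of the ascent/descent pattern. If a clean word-level symmetry proves elusive, the fallback is to compute both heights directly from the geometry of $\mu/\la$ and $\mu^{(k)}/\la^{(k)}$ using Lemma~\ref{lem:vertstrips} and Lemma~\ref{lem:caps} on each side independently, and then add the two counts, checking that every junction of $w$ contributes a vertical domino to exactly one of the two skew shapes.
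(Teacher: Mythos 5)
Your overall strategy matches the paper's: factor $w=HV$, use Corollary~\ref{cor:vertstrips} and Lemma~\ref{lem:caps} for part~\eqref{i:asc}, and use the residue-negation map $i\mapsto k+1-i$ together with part~\eqref{i:asc} for part~\eqref{i:stat}. But your execution of part~\eqref{i:asc} contains a genuine counting error. With the paper's definition, $\ascents(w)$ counts the weak ascents of $w$, and since $V$ is \emph{strictly increasing} every one of the $\len(V)-1$ adjacent pairs inside $V$ is an ascent --- not only the $j-1$ breaks between the maximal segments $v^{(i)}$. Moreover the $H$-to-$V$ junction is a \emph{descent} (as your own parenthetical observes, the last letter of $H$ exceeds the first letter of $V$), so it contributes nothing. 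Hence $\ascents(w)=\len(V)-1$, not $j$; for instance $V=345$ is a single segment yet already carries two ascents. Your height count has the matching omission: besides the $j-1$ caps, each vertical strip coming from a segment $v^{(i)}$ of length $s_i$ contributes $s_i-1$ internal vertical dominos, so $\height(\mu/\la)=(\len(V)-j)+(j-1)=\len(V)-1$. The two sides do still agree, but only after both counts are corrected; as written, your derivation establishes neither of them.

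For part~\eqref{i:stat} your route is the paper's, and the identity $\ascents(w)+\ascents(w')=r-1$ is the right target, but note that the negated word $w'$ is increasing-then-decreasing, hence not literally a hook word, so part~\eqref{i:asc} cannot be applied to it directly. The paper closes exactly this gap with Lemma~\ref{lem:hook}~\eqref{i:hook_to_hook}, which rewrites $w'=V'H'$ as an equivalent genuine hook word $w''=H''V''$ with $\len(V'')=\len(H)+1$; then $\ascents(w'')=\len(H)$ and the sum $(\len(V)-1)+\len(H)=r-1$ follows. You anticipate this subtlety in your final paragraph, but the explicit appeal to Lemma~\ref{lem:hook}~\eqref{i:hook_to_hook} is the missing step that turns the ``swap ascents and descents'' heuristic into a proof.
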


\begin{proof}
  Factoring $w$ as $w=HV$ with the smallest letter of $w$ in $V$, we have
  $\ascents(w)=\len(V)-1$ by definition.  By Corollary~\ref{cor:vertstrips},
  we have $\height((V\cdot \la)/\la) = \len(V)-j$.  Because the cells added by $H$ form a 
  horizontal strip, a cell $c$ created by $H$ will only increase the height statistic if it forms
  the cap for some $v^{(i)}$.  By Lemma~\ref{lem:caps}, $j-1$ of the vertical strips
  will obtain a cap when applying the word $H$.  Hence 
  \[ \height(\mu/\la) = \height((V\cdot \la)/\la) + (j-1) = \len(V)-j + j -1 =
  \len(V)-1 =\ascents(w) \]
  proving~\eqref{i:asc}.
  
  To prove~\eqref{i:stat}, let $w'$ be the image of $w$ under the map which replaces every 
  letter $i$ by $k+1-i \mod (k+1)$.  It is easy to see that $w' \cdot \lambda^{(k)} = \mu^{(k)}$ if 
  $w \cdot \la = \mu$. 
  Also, if $w=HV$ as a hook word, then $w'=V'H'$ with $\len(V')=\len(H)+1$ and $\len(H')=\len(V)-1$,
  grouping the largest letter with $V'$. By Lemma~\ref{lem:hook}~\eqref{i:hook_to_hook}, the
  word $w'$ is equivalent to a hook word $w''=H''V''$ with $\len(H'')=\len(H')$ and 
  $\len(V'')=\len(V')$.  
  Hence, applying part~\eqref{i:asc} to $\lambda^{(k)}, \mu^{(k)}, w''$ we conclude that
  \[
    \ascents(w'') = \height \left( \mu^{(k)} / \lambda^{(k)} \right).
  \]
  This implies
  \begin{align*}
   \height \left(\mu / \lambda \right) + \height \left(\mu^{(k)} / \lambda^{(k)} \right) 
   = \ascents(w) + \ascents(w'') &= \len(V)-1 + \len(V'')-1\\
    &= \len(V)-1 +\len(H) = r - 1 .
    \qedhere
  \end{align*}
\end{proof}

We are finally in the position to show that the conditions of Theorem~\ref{thm:main_cores}
imply the conditions of Definition~\ref{def:kribbon}.

\begin{proposition} \label{prop:prime_implies_orig}
  If the pair $(w,\mu)$ satisfies $(1')$ through $(4')$ of Theorem~\ref{thm:main_cores}, 
  then $\mu$ must satisfy $(0)$ through $(4)$ of Theorem~\ref{thm:main}.
\end{proposition}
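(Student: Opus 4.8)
The plan is to check conditions $(0)$--$(4)$ in turn, exploiting that $(1')$--$(4')$ supply a $k$-connected hook word $w$ of length $r$ with $\mu=w\cdot\la\neq0$, together with the lemmas of this section. Three of the conditions are essentially immediate. Condition $(4)$ is exactly Proposition~\ref{prop:sign}~\eqref{i:stat}. For the size condition $(1)$ and the first half of the containment condition $(0)$, I would note that, $w$ being reduced and $\mu=w\cdot\la\neq0$, every one of its $r$ generators acts nontrivially and adds exactly one box to the $k$-bounded diagram (only the topmost added core cell survives under $\core_{k+1}^{-1}$); hence $\la\subseteq\mu$ and $|\mu/\la|=\len(w)=r$. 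For the second half of $(0)$, namely $\la^{(k)}\subseteq\mu^{(k)}$, I would invoke the observation used in the proof of Proposition~\ref{prop:sign} that the flipped word $w'$, obtained by sending each letter $i$ to $k+1-i \bmod(k+1)$, is again reduced and satisfies $w'\cdot\la^{(k)}=\mu^{(k)}$; the same ``boxes are only added'' reasoning then yields $\la^{(k)}\subseteq\mu^{(k)}$.

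For the connectedness condition $(3)$, I would observe that when $w$ acts on $\core_{k+1}(\la)$ each letter $i$ contributes only cells of residue $i$, and, since $\mu\neq0$, every letter contributes at least one such cell. Thus the set of residues occurring in $\core_{k+1}(\mu)/\core_{k+1}(\la)$ equals $\supp(w)$, which is a cyclic interval by the $k$-connectedness hypothesis $(3')$; this is exactly the statement that the core skew is $k$-connected.

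The ribbon condition $(2)$ is the crux. I would argue by contradiction, supposing $\core_{k+1}(\mu)/\core_{k+1}(\la)$ contains a $2\times2$ square in rows $i,i+1$ and columns $j,j+1$. Its cell contents are $c,c+1,c-1,c$ with $c=j-i$, so its residues are $c-1$, $c+1$, and $c$ appearing twice, at the diagonally adjacent cells $(i,j)$ and $(i+1,j+1)$; for the square to lie in the skew all four cells must be added by $w$. A single application of $u_c$ cannot add both $c$-cells, since a partition cannot have addable corners simultaneously at $(i,j)$ and $(i+1,j+1)$ (the former forces $\la_i=j-1$ while the latter forces $\la_i\ge j+1$). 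Hence $c$ occurs twice in $w$: once in the strictly decreasing part $H$ and once in the strictly increasing part $V$ of the factorization $w=HV$.

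To close the argument I would follow the order of application --- within $w=HV$ the part $V$ acts before $H$ --- and the $(k+1)$-core technique of Lemma~\ref{lem:vertstrips}. Adding the upper cell $(i+1,j+1)$ requires its left and lower neighbors $(i+1,j)$ and $(i,j+1)$, of residues $c-1$ and $c+1$, to be present beforehand, and likewise the lower cell $(i,j)$ requires neighbors of residues $c-1$ and $c+1$; confronting these addability requirements with the constraints the hook inequalities place on the $H$-copy and $V$-copy of $c$ should expose a removable border strip whose length is a multiple of $k+1$, contradicting the core condition. I expect this last piece of geometric bookkeeping --- reconciling the timeline of cell additions forced by the hook factorization with the corner requirements of the two $c$-cells --- to be the delicate part; the remaining conditions all reduce cleanly to the lemmas already established.
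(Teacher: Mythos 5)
Your treatment of conditions $(0)$, $(1)$, $(3)$, and $(4)$ matches the paper's: $(4)$ is Proposition~\ref{prop:sign}~\eqref{i:stat}, $(1)$ and the containment $(0)$ follow from each letter adding one box (your use of the flipped word $w'$ for $\la^{(k)}\subseteq\mu^{(k)}$ is a legitimate elaboration of the paper's appeal to the $k$-Pieri rule), and $(3)$ is the translation of $(3')$ to residues. The problem is condition $(2)$, which you correctly identify as the crux and then do not prove. You set up the right data --- the $2\times2$ square forces $c$ to occur once in $H$ and once in $V$, and the cells of residues $c\pm1$ must be added after the lower $c$-cell and before the upper one --- but then you defer to an unexecuted plan of ``exposing a removable border strip whose length is a multiple of $k+1$,'' explicitly flagging it as the delicate part you have not done. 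That is a genuine gap: the proposition is not proved without it.

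Moreover, the border-strip route is not how the contradiction arises, and the paper closes this step in two lines of pure word combinatorics. Since addition order corresponds to position in $w$ (rightmost letter acts first), the addability requirements you list say precisely that $w$ contains $c\,(c+1)\,(c-1)\,c$ or $c\,(c-1)\,(c+1)\,c$ as a subword. But in a hook word $w=HV$ every letter strictly between the $H$-occurrence and the $V$-occurrence of $c$ lies in the tail of $H$ or the head of $V$ and is therefore strictly smaller than $c$ in the canonical order; in particular $c+1$ cannot sit between the two $c$'s (one checks $c<c+1$ in $I_{\supp(w)}$ because the omitted residue $a$ satisfies $a\notin\supp(w)$ while $c,c+1\in\supp(w)$). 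No core geometry is needed. If you replace your final paragraph with this observation, the proof is complete and agrees with the paper's.
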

\begin{proof}
  By the $k$-Pieri rule, each letter in $w$ adds one cell to $\la$ and $\la^{(k)}$, which
  ensures the containment condition $(0)$.
  Since each letter in $w$ adds one box to the $k$-bounded partition $\la$, condition
  $(1)$ immediately follows from $(1')$. Condition $(3)$ is a direct translation of
  condition $(3')$ on the level of cores.
  
  To see $(2)$, note that for $\core_{k+1}(\mu)/\core_{k+1}(\la)$
  to contain a $2\times 2$ square, the word $w$ such that $\mu = w\cdot \la$ must 
  contain the pattern $a(a+1)(a-1)a$ or $a(a-1)(a+1)a$ (meaning that these have to appear as
  subwords of $w$). However, these patterns cannot appear in hook words, a contradiction.
  Hence $(2)$ follows from $(2')$.

  Condition $(4)$ follows immediately from Proposition~\ref{prop:sign}~\eqref{i:stat}.
\end{proof}

\subsection{Unprimed implies primed}
We now show that the conditions of Definition~\ref{def:kribbon} imply those of 
Theorem~\ref{thm:main_cores}. We first show that with the conditions of
Definition~\ref{def:kribbon}, there is indeed a word $w$ for an element in the affine
nilCoxeter algebra such that $\mu = w\cdot \la$.

\begin{lemma}
\label{lem:w_existence}
  Let $r \le k$, and the pair $\lambda, \mu$ satisfies conditions $(0)$ through
  $(4)$ of Definition~\ref{def:kribbon}.  Then there exists a $k$-connected hook
  word $w$ of length $r$ such that $w \cdot \lambda = \mu$.
\end{lemma}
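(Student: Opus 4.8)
The plan is to construct the word $w$ by reading residues off the core ribbon and then to verify the four primed conditions, essentially reversing the analysis of Corollary~\ref{cor:vertstrips} and Lemma~\ref{lem:caps}. First I would pass to cores: by the containment condition $(0)$ we have $\core_{k+1}(\la)\subseteq\core_{k+1}(\mu)$, so the skew $R=\core_{k+1}(\mu)/\core_{k+1}(\la)$ is a genuine border strip, which by conditions $(2)$ and $(3)$ is a $k$-connected ribbon inside a $(k+1)$-core. I would first record that the residues of the cells of $R$ are distinct: a repeated residue would produce, via the argument already used in Lemma~\ref{lem:vertstrips}, a removable sub-strip whose length is a multiple of $k+1$, contradicting $(k+1)$-coreness. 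Reading the ribbon so that a rightward step raises the residue by $1$ and an upward step lowers it by $1$, the support $S$ of $R$ is then exactly the set of residues occurring, and by $(3)$ it is a cyclic interval.

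Next I would decompose $R$ into its maximal vertical runs and read off a candidate word. Each maximal vertical run, read from bottom to top, carries strictly decreasing residues $a,a-1,\dots,a-s$; recording it as the strictly increasing block $v^{(i)}=(a-s)\cdots(a-1)a$ and concatenating these blocks in the order dictated by $R$ produces the increasing part $V=v^{(j)}\cdots v^{(1)}$ of the desired word. The remaining cells of $R$, namely those that cap the vertical runs, are read as the strictly decreasing part $H$, with the global minimum residue of $R$ placed at the junction and assigned to $V$ (so that the bottom run $v^{(j)}$ carries no cap). By construction $w=HV$ is a hook word in the canonical order of $S$, which gives condition $(2')$, and its support is exactly $S$, a cyclic interval, which gives condition $(3')$.

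The heart of the argument is then the weak order condition $(4')$, that is, $w\cdot\la=\mu$. Applying $V$ to $\core_{k+1}(\la)$, Lemma~\ref{lem:vertstrips} and Corollary~\ref{cor:vertstrips} guarantee that each block $v^{(i)}$ adds exactly the corresponding connected vertical strip of $R$, and Lemma~\ref{lem:caps} guarantees that the letters of $H$ attach precisely as the caps of these strips; together these rebuild all of $R$ and nothing more, so $w\cdot\core_{k+1}(\la)=\core_{k+1}(\mu)$ and hence $w\cdot\la=\mu$. The key point to check here is that at each generator application the only addable corner of the relevant residue is the intended ribbon cell; this follows from the distinctness of residues in $R$ together with the core-constraint argument of Lemma~\ref{lem:vertstrips}. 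Granting $w\cdot\la=\mu\neq0$, the size condition $(1')$ is then automatic: every generator adds exactly one cell to the $k$-bounded partition, so $\len(w)=|\mu/\la|=r$ by the size condition $(1)$.

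The main obstacle is to show that conditions $(0)$–$(3)$ really do force $R$ to be readable as a \emph{single} hook word rather than a concatenation of several, and this is exactly where the height statistics condition $(4)$ enters. By Proposition~\ref{prop:sign}, any $k$-connected hook word $w$ with $w\cdot\la=\mu$ satisfies $\height(\mu/\la)+\height(\mu^{(k)}/\la^{(k)})=\len(w)-1$, so condition $(4)$ is precisely the compatibility constraint certifying that the reading above closes up into one hook with the correct ascent number; I would use $(4)$ to exclude the degenerate readings that would require an additional descent, forcing the junction between $H$ and $V$ to occur at the unique smallest residue. The delicate bookkeeping of which cap belongs to which vertical run, and the verification that no spurious corner is ever added during the construction, is the part I expect to require the most care.
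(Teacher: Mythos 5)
Your opening step is already false, and the error is fatal to the rest of the argument. You assert that $R=\core_{k+1}(\mu)/\core_{k+1}(\la)$ is a connected border strip and that its residues are distinct, with one cell per residue. Conditions $(2)$ and $(3)$ give much less than this: $(2)$ says only that $R$ contains no $2\times 2$ square, and $(3)$ says only that the \emph{set} of residues occurring in $R$ is an interval in the canonical cyclic order --- neither implies that $R$ is connected as a skew shape. In fact $R$ is typically disconnected with many repeated residues, because each generator $u_i$ adds \emph{all} addable corners of residue $i$ to the core. For instance, with $k=3$, $\la=(3,3,1,1)$ and $w=u_0u_3$ (a hook word of length $2$), the skew core $R$ consists of seven cells in four disconnected pieces, with residue $0$ appearing four times and residue $3$ three times; there is no contradiction with $(k+1)$-coreness. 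Your appeal to the removable-border-strip argument of Lemma~\ref{lem:vertstrips} does not apply here: that argument rules out certain relative positions of the \emph{topmost} added cells of adjacent residues, not the mere coexistence of two cells of equal residue in the skew core. Consequently your reading procedure, which assigns one letter per cell of $R$, produces a word of length $|R|$, which is in general strictly larger than $r=|\mu/\la|$, and the subsequent identification of $w\cdot\la$ with $\mu$ collapses.

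This misconception also hides where the real work of the lemma lies. The paper builds $w$ out of the \emph{set} of residues: $V$ takes each residue that occurs in some non-column-topmost cell of $R$ (once), and $H$ takes each remaining needed residue (once); it then proves only the weaker statement $\nu:=w\cdot\la\supseteq\mu$, because the generators may add more corners than the cells of $R$ one is aiming for. The equality $\nu=\mu$ is then forced by a genuine argument: one shows $\height(\nu/\la)\le\height(\mu/\la)$ (via a careful hook-length argument that every topmost vertical domino of residues $(a,a-1)$ in the skew core descends to a vertical domino of $\mu/\la$ under $\core_{k+1}^{-1}$), likewise for the $k$-conjugates, and then combines condition $(4)$ with Proposition~\ref{prop:sign}\,(2) to squeeze $\len(w)=r$, whence $|\nu|=|\mu|$ and $\nu=\mu$. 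Your proposal instead declares $(1')$ ``automatic'' once $w\cdot\la=\mu$ is granted, and invokes condition $(4)$ only vaguely ``to exclude degenerate readings''; this skips precisely the step --- ruling out $\nu\supsetneq\mu$ --- for which condition $(4)$ is indispensable. Also, citing Lemma~\ref{lem:caps} to justify that $H$ ``attaches precisely as the caps'' inverts its logical role: that lemma describes the effect of applying a hook word, and cannot by itself certify that your constructed word reproduces a prescribed target shape.
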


\begin{proof}
  The proof proceeds as follows.  We first produce a $k$-connected
  hook word $w$ such that $w \cdot \lambda = \nu \supseteq \mu$.  Then we show
  that 
  \[ \height(\nu / \lambda) + \height\left( \nu^{(k)} / \lambda^{(k)}
  \right) \le \height(\mu / \lambda) + \height\left( \mu^{(k)} /
  \lambda^{(k)} \right) = r-1.\]  
  On the other hand, we can appeal to
  Proposition~\ref{prop:sign}~\eqref{i:stat} to conclude that
  \begin{equation}
  \label{e:height_nu}  
  	 \height(\nu / \lambda) + \height\left( \nu^{(k)} / \lambda^{(k)} \right) = \len(w) - 1 .
  \end{equation}
  Since $\nu \supseteq \mu$, we know that $\len(w) \ge r$.  Combining this
  with the equations above, we conclude that $\len(w) = r$.  Hence $|\nu| =
  |\mu| = |\lambda| + r$ and, since $\nu \supseteq \mu$, we must have $\nu =
  \mu$. Thus $w \cdot \lambda = \mu$.

  We now give the details of the construction of the word $w$.
  Let $\mathcal{I}$ be the set of all residues which appear in
  $\core_{k+1}(\mu) / \core_{k+1}(\lambda)$, which is an interval due to $k$-connectedness.  
  Let $\mathcal{V}$ be the set of residues occuring in the cells of 
  $\core_{k+1}(\mu) / \core_{k+1}(\lambda)$ which are
  not topmost in their column. Let $V = v_a \cdots v_0$ be the ``vertical strip
  word'' consisting of the elements of $\mathcal{V}$.  That is, $v_a <
  v_{a-1} < \cdots < v_0$ with respect to the canonical order of
  $\mathcal{I}$, and $\mathcal{V} = \{v_i\}_{i=0}^{a}$.

  We claim that $V \cdot \lambda \neq 0$ and furthermore that $\core_{k+1}(V\cdot \lambda)$
  contains all of the cells in $\core_{k+1}(\mu) / \core_{k+1}(\lambda)$ which
  are not topmost in their column.  Note that any cell which is not topmost in
  its column must be leftmost in its row, since the skew core is a ribbon.  If
  $i$ is the residue of any such cell, it will be an addable residue when the
  residues of all cells below it (in its column) have been added.  But these
  are necessarily all larger than $i$ in the canonical order.

  Now let $\mathcal{H}$ be the set of residues occuring in the cells of
  $\core_{k+1}(\mu) / \core_{k+1}(\lambda)$, which do not occur in $V\cdot \lambda$. 
  Let $H = h_b \cdots h_0$ be the ``horizontal strip word''
  consisting of the elements of $\mathcal{H}$.  That is, $h_b > h_{b-1} >
  \cdots > h_0$ with respect to the canonical order of $\mathcal{I}$, and
  $\mathcal{H} = \{h_i\}_{i=0}^b$.

  We define $w = HV$ and claim that $\nu := w \cdot \lambda \supseteq \mu$. By
  construction of $V$, any cell in $\core_{k+1}(\mu) / \core_{k+1}(\lambda)$
  which is not part of $\core_{k+1}(V \cdot \lambda)$ must be topmost in its column.
  Therefore, if $i$ is the residue of any such cell, it will be an addable
  residue when the residues of all cells to the left of it (in its row) have
  been added.  But these are necessarily all smaller than $i$ in the canonical
  order.

  Notice that the residues appearing in $w$ are precisely those occuring
  in $\mathcal{I}$, so $w$ is $k$-connected.  By construction, $w$ is also a
  hook word (although note that the smallest letter of $w$ is part of $H$, not
  $V$, according to this construction).  Since $\nu \supseteq \mu$, we must
  have $\len(w) \ge r$.  By Proposition~\ref{prop:sign}~\eqref{i:stat}, we
  conclude that~\eqref{e:height_nu} holds.

  We now claim that 
  \[
    \height(\nu / \lambda) \le \height(\mu / \lambda).
  \]
  The proof proceeds as follows.  First we note that by
  Proposition~\ref{prop:sign}~\eqref{i:asc}, we have that $\height(\nu /
  \lambda) = \ascents(w) = \len(V)$.  Next we observe that from the definition of
  the $\height$ statistic that $\height(\mu / \lambda)$ is the number of
  vertical dominos in $\mu / \lambda$ since $\mu / \lambda$ is a ribbon.  Next
  we recall that by definition, every letter $a$ in $V$ corresponds to a
  vertical domino in $\core_{k+1}(\mu) / \core_{k+1}(\lambda)$ (where $a$ is
  the residue of the bottom of the domino).  It remains to show that each of
  these dominos corresponds to a domino in $\mu / \lambda$. We first show that
  if $(a,a-1)$ occur as residues of a vertical domino in the skew core, then
  the topmost occurence of $a$ and the topmost occurence of $a-1$ occur as a
  vertical domino in the skew core.  Suppose there were an $a-1$ occuring
  above the topmost domino.  Then there would be an $a$ below it in
  $\core_{k+1}(\lambda)$, and there would also be an $a-1$ in
  $\core_{k+1}(\lambda)$ to the left of the $a$ in the domino.  This would
  form a forbidden border strip in $\core_{k+1}(\lambda)$.  Now suppose
  instead there were an $a$ occuring above the topmost domino.  This $a$ to
  the $a-1$ in the domino would form a forbidden border strip in
  $\core_{k+1}(\mu)$.  
  
  Finally, we need to show that the topmost domino in $\core_{k+1}(\mu)/ \core_{k+1}(\la)$
  with residue $(a,a-1)$ corresponds to a domino in $\mu / \lambda$. Suppose that
  these are in row $i$ and $i+1$, respectively.
  Recall that to go from a $(k+1)$-core to a $k$-bounded partition one crosses
  out all cells with hook length greater than $k+1$.
  Since $\core_{k+1}(\la)_i=\core_{k+1}(\la)_{i+1}$ and there are
  no cells with hook length $k+1$ in a $(k+1)$-core, the rightmost crossed out cell in 
  $\core_{k+1}(\la)$ under $\core_{k+1}^{-1}$ in rows $i$ and $i+1$ must be in the same column 
  $j$ and hence we must have $\la_i=\la_{i+1}$. 
  This implies that the hook length of the cell $(i+1,j+1)$ in $\core_{k+1}(\la)$
  is strictly smaller than $k$. Now look at the hook length of the
  cell $(i+1,j+1)$ in $\core_{k+1}(\mu)$. Since $\core_{k+1}(\mu)/\core_{k+1}(\la)$
  is a ribbon, there is precisely one cell in row $i+1$ in this skew shape.
  Any cells in column $j+1$ that are in $\core_{k+1}(\mu)$, but not in $\core_{k+1}(\la)$,
  must have residues different from $a$ and $a-1$ since the ones in row $i$ and $i+1$
  are topmost. This implies that the hook length
  of $(i+1,j+1)$ in $\core_{k+1}(\mu)$ is still smaller than $k+1$, so that there is a cell
  in row $i+1$ of $\mu/\la$. Since $\mu$ is a partition and $\la_i=\la_{i+1}$, there is
  also a cell in row $i$ of $\mu/\la$, so that there is a domino as desired.

  The same argument applied to the conjugate partitions shows
  \[
  \height\left(\nu^{(k)} / \lambda^{(k)}\right) \le 
  \height\left(\mu^{(k)} / \lambda^{(k)}\right).
  \]
  From this we get that $\len(w) \le r$, and we can complete the proof as
  outlined in the first paragraph.
\end{proof} 

\begin{proposition}
\label{prop:unprime_to_prime}
  Fix $\lambda \in \Park{}$.  If $\mu \in \Park{}$ satisfies conditions $(0)$ through $(4)$
  of Definition~\ref{def:kribbon}, then there exists a unique word $w$ so that the pair $(w, \mu)$ 
  satisfies the conditions $(1')$ through $(4')$ of Theorem~\ref{thm:main_cores}.
\end{proposition}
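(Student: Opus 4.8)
The plan is to separate existence from uniqueness, since each half is within reach of a result already in hand. Existence is immediate from Lemma~\ref{lem:w_existence}: taking $r=|\mu/\la|$ (condition $(1)$ of Definition~\ref{def:kribbon}), that lemma furnishes a $k$-connected hook word $w$ of length $r$ with $w\cdot\la=\mu$, and such a $w$ satisfies $(1')$ (length $r$), $(2')$ (hook word), $(3')$ ($k$-connectedness) and $(4')$ ($\mu=w\cdot\la$). So the entire content of the proposition lies in showing that $w$ is \emph{unique}.

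For uniqueness I would argue in two stages: first that the affine nilCoxeter \emph{element} $u\in\A_k$ carried by any admissible word is already forced by the pair $(\la,\mu)$, and then that this element has only one hook reduced word. Concretely, suppose $(w,\mu)$ and $(\tilde w,\mu)$ both satisfy $(1')$--$(4')$, and write $u,\tilde u$ for the elements of $\A_k$ they represent. The first stage uses the standard identification (Lam~\cite{Lam:2006}, see also~\cite{LM:2005, LLMS:2006}) of $(k+1)$-cores with minimal coset representatives $x_\nu$ in the affine symmetric group, under which the action~\eqref{e:nilcoxeter_action} becomes length-increasing left multiplication and $\len(x_\nu)=|\nu|$. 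Since $w\cdot\la=\mu\neq 0$, every generator in $w$ acts nontrivially, so the action is length-additive at every step and $\len(x_\mu)=\len(x_\la)+r$; hence $u$ corresponds to the group element $x_\mu x_\la^{-1}$, and the same computation gives $\tilde u = x_\mu x_\la^{-1}$, so that $u=\tilde u$.

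The second stage is then purely combinatorial. Because $w$ satisfies $(3')$, the element $u$ is $k$-connected, and $w,\tilde w$ are both hook reduced words for $u$ in the canonical order $I_{\supp(u)}$. Lemma~\ref{lem:hook}~\eqref{i:unique} asserts that a $k$-connected element admits at most one hook reduced word, so $w=\tilde w$, completing the proof.

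The delicate point is the first stage: ruling out two genuinely distinct elements $u\neq\tilde u$ that send $\la$ to the same $\mu$. I would handle this through the length identity $\len(x_\nu)=|\nu|$, which makes a nonzero length-$r$ action automatically reduced and length-additive, after which $u=x_\mu x_\la^{-1}$ is forced. The one subtlety to check carefully is that a reduced word with $w\cdot\la\neq 0$ acts with strictly increasing length at \emph{every} intermediate core, since a single length-decreasing step under~\eqref{e:nilcoxeter_action} would annihilate the core and contradict $w\cdot\la\neq 0$; this is exactly what prevents the length of $u$ from dropping below $r$ and pins it down as $x_\mu x_\la^{-1}$.
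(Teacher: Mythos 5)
Your proof is correct and follows essentially the same route as the paper: existence via Lemma~\ref{lem:w_existence} and uniqueness via Lemma~\ref{lem:hook}~\eqref{i:unique}. The paper's own proof is just these two citations; your intermediate step identifying the underlying nilCoxeter element as $x_\mu x_\la^{-1}$ (so that two admissible words must represent the \emph{same} element before Lemma~\ref{lem:hook}~\eqref{i:unique} can be applied) is a point the paper leaves implicit, and you are right that it is needed and that length-additivity of the action on cores supplies it.
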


\begin{proof}
  By Lemma~\ref{lem:w_existence}, there exists a $k$-connected hook word $w$ of length $r$
  such that $\mu = w\cdot \la$. This implies the existence of the pair $(\mu,w)$ satisfying
  $(1')$ through $(4')$ of Theorem~\ref{thm:main_cores}. By 
  Lemma~\ref{lem:hook}~\eqref{i:unique} this pair is unique.
\end{proof}

By Propositions~\ref{prop:prime_implies_orig} and~\ref{prop:unprime_to_prime},
the summations in Theorems~\ref{thm:main} and~\ref{thm:main_cores} are the same.
By Proposition~\ref{prop:sign}~\eqref{i:asc} the signs also agree.
Hence Theorem~\ref{thm:main_cores} implies Theorem~\ref{thm:main}.

\section{Outlook}
\label{s:outlook}

By Corollaries~\ref{cor:main},~\ref{cor:to_sym} and~\ref{cor:main1}, the Murnaghan-Nakayama 
rule proved in this paper gives the expansion of the power sum symmetric functions in terms of 
the $k$-Schur functions $s_\la^{(k)} \in \Lambda_{(k)}$ and the expansion of the dual $k$-Schur 
functions $\sig_\la^{(k)} \in \Lambda^{(k)}$ in terms of the power sums:
\[ 
  p_\nu = \sum_{\lambda\in \Park{}}^{} \chi^{(k)}_{\lambda,\nu} \; s^{(k)}_\lambda
  \qquad \text{and} \qquad
  \sig^{(k)}_\nu = \sum_{\lambda \in \Park{}}^{} \frac{1}{z_\lambda} \chi^{(k)}_{\nu,\lambda} \;
  p_\lambda \; . 
\]
Unlike in the symmetric function case, where the Schur functions $s_\la\in \Lambda$ are self-dual,
there should be a dual version of the Murnaghan-Nakayama rule of this paper, namely 
a combinatorial formula for the coefficients $\tilde{\chi}^{(k)}_{\lambda,\nu}$  in the expansion
of the power sum symmetric functions in terms of the dual $k$-Schur functions
\[
   p_\nu = \sum_{\lambda\in \Park{}}^{} \tilde{\chi}^{(k)}_{\lambda,\nu} \; \sig^{(k)}_\lambda
\]
or, equivalently by the same arguments as in the proof of Corollary~\ref{cor:main1},
\[
  s^{(k)}_\nu = \sum_{\lambda \in \Park{}}^{} \frac{1}{z_\lambda} \tilde{\chi}^{(k)}_{\nu,\lambda} \;
  p_\lambda \; . 
\]

Since the $s^{(k)}_\nu$ are known to be Schur-positive symmetric functions~\cite{LLMS:2010}, 
they correspond to representations of the symmetric group
under the Frobenius characteristic map.  Furthermore, the characters of these
representations are given by the $\tilde{\chi}^{(k)}_{\nu,\lambda}$.  An
explicit description of such representations is an interesting open problem,
which has been studied by Li-Chung Chen and Mark Haiman~\cite{CH:2008}.  In
the most generality they conjecture a representation theoretical model for the
$k$-Schur functions with a parameter $t$ which keeps track of the degree
grading; the $\tilde{\chi}^{(k)}_{\nu,\lambda}$ described above should give
the characters of these representations without regard to degree.  Tables of
$\chi_{\la,\mu}^{(k)}$ and $\tilde{\chi}_{\la,\mu}^{(k)}$ are listed in
Appendices~\ref{appendix:chi} and~\ref{appendix:chi-tilde}.

Computer evidence suggests that the ribbon condition (2) of Definition~\ref{def:kribbon}
might be superfluous because it is implied by the other conditions of the definition. 
This was checked for $k,r\le $11 and for all $|\la|=n\le 12$ and
$|\mu| = n+r$.

\appendix
\section{Tables of $\chi_{\la,\nu}^{(k)}$}
\label{appendix:chi}

In the tables below, the partitions $\lambda$ index the row and $\nu$ indexes the column for the values of
$\chi_{\la,\nu}^{(k)}$.
\begin{center}
$k=2$, $n=3$
\begin{tabular}{|c|c|c|}
\hline
&
(111)
&
(21)\cr
\hline
(111)
&
1
&
-1
\cr\hline
(21)&
1
&
1
\cr\hline
\end{tabular}
\end{center}

\vskip .2in
\begin{center}
$k=2$, $n=4$
\begin{tabular}{|c|c|c|c|}
\hline
&
(1111)
&
(211)
&
(22)
\cr\hline
(1111)
&
1
&
-1
&
1
\cr\hline
(211)&
2
&
0
&
-2
\cr\hline
(22)&
1
&
1
&
1
\cr\hline
\end{tabular}
\end{center}

\vskip .2in
\begin{center}
$k=2$, $n=5$
\begin{tabular}{|c|c|c|c|}
\hline
&
(11111)
&
(2111)
&
(221)
\cr\hline
(11111)
&
1
&
-1
&
1
\cr\hline
(2111)&
2
&
0
&
-2
\cr\hline
(221)
&
1
&
1
&
1
\cr\hline
\end{tabular}
\end{center}

\vskip .2in
\begin{center}
$k=2$, $n=6$
\begin{tabular}{|c|c|c|c|c|}
\hline
&
(111111)
&
(21111)
&
(2211)
&
(222)
\cr\hline
(111111)
&
1
&
-1
&
1
&
-1
\cr\hline
(21111)&
3
&
-1
&
-1
&
3
\cr\hline
(2211)&
3
&
1
&
-1
&
-3
\cr\hline
(222)&
1
&
1
&
1
&
1
\cr\hline
\end{tabular}
\end{center}

\vskip .2in
\begin{center}
$k=3$, $n=4$
\begin{tabular}{|c|c|c|c|c|}
\hline
&
(1111)
&
(211)
&
(22)
&
(31)
\cr\hline
(1111)
&
1
&
-1
&
1
&
1
\cr\hline
(211)&
2
&
0
&
-2
&
-1
\cr\hline
(22)&
2
&
0
&
2
&
-1
\cr\hline
(31)&
1
&
1
&
1
&
1
\cr\hline
\end{tabular}
\end{center}

\vskip .2in
\begin{center}
$k=3$, $n=5$
\begin{tabular}{|c|c|c|c|c|c|c|}
\hline
&
(11111)&
(2111)&
(221)&
(311)
&
(32)\cr\hline
(11111)&
1
&
-1
&
1
&
1
&
-1
\cr\hline
(2111)&
3
&
-1
&
-1
&
0
&
2
\cr\hline
(221)&
4
&
0
&
0
&
-2
&
0
\cr\hline
(311)&
3
&
1
&
-1
&
0
&
-2
\cr\hline
(32)&
1
&
1
&
1
&
1
&
1
\cr\hline
\end{tabular}
\end{center}

\vskip .2in
\begin{center}
$k=3$, $n=6$
\begin{tabular}{|c|c|c|c|c|c|c|c|c|}
\hline
&
(111111)&
(21111)&
(2211)&
(3111)&
(222)&
(321)&
(33)
\cr\hline
(111111)&
1
&
-1
&
1
&
1
&
-1
&
-1
&
1
\cr\hline
(21111)&
4
&
-2
&
0
&
1
&
2
&
1
&
-2
\cr\hline
(2211)&
4
&
0
&
0
&
-2
&
-4
&
0
&
1
\cr\hline
(3111)&
6
&
0
&
-2
&
0
&
0
&
0
&
3
\cr\hline
(222)&
4
&
0
&
0
&
-2
&
4
&
0
&
1
\cr\hline
(321)&
4
&
2
&
0
&
1
&
-2
&
-1
&
-2
\cr\hline
(33)&
1
&
1
&
1
&
1
&
1
&
1
&
1
\cr\hline
\end{tabular}
\end{center}

\vskip .2in
\begin{center}
$k=4$, $n=5$
\begin{tabular}{|c|c|c|c|c|c|c|c|c|}
\hline
&
(11111)&
(2111)&
(221)&
(311)&
(32)&
(41)\cr\hline
(11111)&
1
&
-1
&
1
&
1
&
-1
&
-1
\cr\hline
(2111)&
3
&
-1
&
-1
&
0
&
2
&
1
\cr\hline
(221)&
5
&
-1
&
1
&
-1
&
-1
&
1
\cr\hline
(311)&
3
&
1
&
-1
&
0
&
-2
&
-1
\cr\hline
(32)&
5
&
1
&
1
&
-1
&
1
&
-1
\cr\hline
(41)&
1
&
1
&
1
&
1
&
1
&
1
\cr\hline
\end{tabular}
\end{center}

\vskip .2in
\begin{center}
$k=4$, $n=6$
\begin{tabular}{|c|c|c|c|c|c|c|c|c|c|c|}
\hline
&
(111111)&
(21111)&
(2211)&
(3111)&
(222)&
(321)&
(411)&
(33)&
(42)\cr\hline
(111111)&
1
&
-1
&
1
&
1
&
-1
&
-1
&
-1
&
1
&
1
\cr\hline
(21111)&
4
&
-2
&
0
&
1
&
2
&
1
&
0
&
-2
&
-2
\cr\hline
(2211)&
8
&
-2
&
0
&
-1
&
-2
&
1
&
2
&
-1
&
0
\cr\hline
(3111)&
6
&
0
&
-2
&
0
&
0
&
0
&
0
&
3
&
2
\cr\hline
(222)&
5
&
-1
&
1
&
-1
&
3
&
-1
&
1
&
2
&
-1
\cr\hline
(321)&
8
&
2
&
0
&
-1
&
2
&
-1
&
-2
&
-1
&
0
\cr\hline
(411)&
4
&
2
&
0
&
1
&
-2
&
-1
&
0
&
-2
&
-2
\cr\hline
(33)&
5
&
1
&
1
&
-1
&
-3
&
1
&
-1
&
2
&
-1
\cr\hline
(42)&
1
&
1
&
1
&
1
&
1
&
1
&
1
&
1
&
1
\cr\hline
\end{tabular}
\end{center}

\section{Tables of $\tilde{\chi}_{\la,\nu}^{(k)}$}
\label{appendix:chi-tilde}

In the tables below, the partitions $\lambda$ index the row and $\nu$ indexes the column for the values of
$\tilde{\chi}_{\la,\nu}^{(k)}$.

\begin{center}
$k=2$, $n=3$
\begin{tabular}{|c|c|c|}
\hline
&
(111)
&
(21)\cr
\hline
(111)
&
3
&
-1
\cr\hline
(21)
&
3
&
1
\cr\hline
\end{tabular}
\end{center}

\vskip .2in
\begin{center}
$k=2$, $n=4$
\begin{tabular}{|c|c|c|c|}
\hline
&
(1111)
&
(211)
&
(22)
\cr\hline
(1111)
&
6
&
-2
&
2
\cr\hline
(211)
&
6
&
0
&
-2
\cr\hline
(22)
&
6
&
2
&
2
\cr\hline
\end{tabular}
\end{center}

\vskip .2in
\begin{center}
$k=2$, $n=5$
\begin{tabular}{|c|c|c|c|}
\hline
&
(11111)
&
(2111)
&
(221)
\cr\hline
(11111)
&
30
&
-6
&
2
\cr\hline
(2111)
&
30
&
0
&
-2
\cr\hline
(221)
&
30
&
6
&
2
\cr\hline
\end{tabular}
\end{center}

\vskip .2in
\begin{center}
$k=2$, $n=6$
\begin{tabular}{|c|c|c|c|c|}
\hline
&
(111111)
&
(21111)
&
(2211)
&
(222)
\cr\hline
(111111)
&
90
&
-18
&
6
&
-6
\cr\hline
(21111)
&
90
&
-6
&
-2
&
6
\cr\hline
(2211)
&
90
&
6
&
-2
&
-6
\cr\hline
(222)
&
90
&
18
&
6
&
6
\cr\hline
\end{tabular}
\end{center}

\vskip .2in
\begin{center}
$k=3$, $n=4$
\begin{tabular}{|c|c|c|c|c|}
\hline
&
(1111)
&
(211)
&
(22)
&
(31)
\cr\hline
(1111)
&
4
&
-2
&
0
&
1
\cr\hline
(211)
&
6
&
0
&
-2
&
0
\cr\hline
(22)
&
2
&
0
&
2
&
-1
\cr\hline
(31)
&
4
&
2
&
0
&
1
\cr\hline
\end{tabular}
\end{center}

\vskip .2in
\begin{center}
$k=3$, $n=5$
\begin{tabular}{|c|c|c|c|c|c|c|}
\hline
&
(11111)&
(2111)&
(221)&
(311)
&
(32)\cr\hline
(11111)&
10
&
-4
&
2
&
1
&
-1
\cr\hline
(2111)&
10
&
-2
&
-2
&
1
&
1
\cr\hline
(221)&
10
&
0
&
2
&
-2
&
0
\cr\hline
(311)&
10
&
2
&
-2
&
1
&
-1
\cr\hline
(32)
&
10
&
4
&
2
&
1
&
1
\cr\hline
\end{tabular}
\end{center}

\vskip .2in
\begin{center}
$k=3$, $n=6$
\begin{tabular}{|c|c|c|c|c|c|c|c|c|}
\hline
&
(111111)&
(21111)&
(2211)&
(3111)&
(222)&
(321)&
(33)
\cr\hline
(111111)&
20
&
-8
&
4
&
2
&
0
&
-2
&
2
\cr\hline
(21111)&
40
&
-8
&
0
&
1
&
0
&
1
&
-2
\cr\hline
(2211)&
30
&
-2
&
2
&
-3
&
-6
&
1
&
0
\cr\hline
(3111)&
20
&
0
&
-4
&
2
&
0
&
0
&
2
\cr\hline
(222)&
30
&
2
&
2
&
-3
&
6
&
-1
&
0
\cr\hline
(321)&
40
&
8
&
0
&
1
&
0
&
-1
&
-2
\cr\hline
(33)
&
20
&
8
&
4
&
2
&
0
&
2
&
2
\cr\hline
\end{tabular}
\end{center}

\vskip .2in
\begin{center}
$k=4$, $n=5$
\begin{tabular}{|c|c|c|c|c|c|c|c|c|}
\hline
&
(11111)&
(2111)&
(221)&
(311)&
(32)&
(41)\cr\hline
(11111)&
5
&
-3
&
1
&
2
&
0
&
-1
\cr\hline
(2111)&
10
&
-2
&
-2
&
1
&
1
&
0
\cr\hline
(221)&
5
&
-1
&
1
&
-1
&
-1
&
1
\cr\hline
(311)&
10
&
2
&
-2
&
1
&
-1
&
0
\cr\hline
(32)&
5
&
1
&
1
&
-1
&
1
&
-1
\cr\hline
(41)&
5
&
3
&
1
&
2
&
0
&
1
\cr\hline
\end{tabular}
\end{center}

\vskip .2in
\begin{center}
$k=4$, $n=6$
\begin{tabular}{|c|c|c|c|c|c|c|c|c|c|c|}
\hline
&
(111111)&
(21111)&
(2211)&
(3111)&
(222)&
(321)&
(411)&
(33)&
(42)\cr\hline
(111111)&
15
&
-7
&
3
&
3
&
-3
&
-1
&
-1
&
0
&
1
\cr\hline
(21111)&
15
&
-5
&
-1
&
3
&
3
&
1
&
-1
&
0
&
-1
\cr\hline
(2211)&
25
&
-3
&
1
&
-2
&
-3
&
0
&
1
&
-2
&
1
\cr\hline
(3111)&
20
&
0
&
-4
&
2
&
0
&
0
&
0
&
2
&
0
\cr\hline
(222)&
5
&
-1
&
1
&
-1
&
3
&
-1
&
1
&
2
&
-1
\cr\hline
(321)&
25
&
3
&
1
&
-2
&
3
&
0
&
-1
&
-2
&
1
\cr\hline
(411)&
15
&
5
&
-1
&
3
&
-3
&
-1
&
1
&
0
&
-1
\cr\hline
(33)&
5
&
1
&
1
&
-1
&
-3
&
1
&
-1
&
2
&
-1
\cr\hline
(42)&
15
&
7
&
3
&
3
&
3
&
1
&
1
&
0
&
1
\cr\hline
\end{tabular}
\end{center}

\end{document}